\newtheorem{thm}{Theorem}
\newtheorem{prop}[thm]{Proposition}
\newtheorem{cor}[thm]{Corollary}
\newtheorem{defn}{Definition}
\newtheorem{exmp}{Example}
\newtheorem{rmk}{Remark}
\def \bx {\mathbf{x}}
\def \bc {\mathbf{c}}
\def \bd {\mathbf{d}}
\def \by {\mathbf{y}}
\def \bh {\mathbf{h}}
\def \btheta {\mathbf{\theta}}
\def \ba {\mathbf{a}}
\def \bchi {\chi}
\def \bz {\mathbf{z}}
\def \bpi {\pi}
\def \bmu {\mu}
\def \bzero {\mathbf{0}}
\def \bu {\mathbf{u}}
\def \bs {\mathbf{s}}
\def \bR {\mathbb{R}}
\def \conv {\operatorname{conv}}
\def \proj {\operatorname{proj}}
\def \hEI {E_I^{Q}}
\def \hEIchi {E_I^{Q}(\bchi)}
\def \nuIchi{\nu_I^Q(\bchi)}
\def \nuIfn{\nu_{I}^Q(\cdot)}
\def \bnuIab {\bar{\nu}_I^R(\chi;\alpha,\beta)}
\def \gx {g^Q_{\hat{\bx}}}
\newcommand{\gxq}[1]{g^{#1}_{\hat{\bx}}}
\def \gI {g_{\hat{\bx}}^Q(I)}
\def \DRI {D^R_I}
\def \DRIchi {D^R_I(\bchi)}
\newcommand{\exclude}[1]{}
\begin{document}
%
\title{Sparse multi-term disjunctive cuts for the epigraph of a function of binary variables\thanks{Parts of the paper have been published in proceedings of the 23rd International Conference on Integer Programming and Combinatorial Optimization, IPCO 2022. This research is supported by the Office of Naval Research under grant N00014-21-1-2574 and by NSF under grant 2000986.}}
\titlerunning{Sparse disjunctive cuts}
%
%
\author{Rui Chen\inst{1} \and
James Luedtke\inst{2}}
\authorrunning{R. Chen and J. Luedtke}
%
\institute{
Cornell Tech, New York, NY, USA. 
\email{rui.chen@cornell.edu}
\and
University of Wisconsin-Madison, Madison WI, USA.
\email{jim.luedtke@wisc.edu} }
\maketitle              
\begin{abstract}
We propose a new method for separating valid inequalities for the epigraph of a function of binary
variables. The proposed inequalities are disjunctive cuts defined by disjunctive terms obtained by enumerating a subset
$I$ of the binary variables. We show that by restricting the support of the cut to the same set of variables $I$, a cut
can be obtained by solving a linear program with $2^{|I|}$ constraints. While this limits the size of the set $I$ used to
define the multi-term disjunction, the procedure enables generation of multi-term disjunctive cuts using far more terms than
existing approaches. We present two approaches for choosing the subset of variables.
Experience on
three MILP problems with block diagonal structure using $|I|$ up to size 10 indicates the sparse cuts can
often close nearly as much gap as the multi-term disjunctive cuts without this restriction and in a fraction of the
time. We also find that including these cuts within a cut-and-branch solution method for these MILP problems leads to
significant reductions in solution time or ending optimality gap for instances that were not solved within the time
limit. Finally, we describe how the proposed approach can be adapted to optimally ``tilt'' a given valid inequality by
modifying the coefficients of a sparse subset of the variables.

\keywords{Disjunctive cuts \and Epigraph \and Sparsity \and Valid inequalities}
\end{abstract}
\section{Introduction}
We explore techniques for generating valid inequalities (cuts) for the epigraph $E$ of a function $Q':X
\rightarrow \bR$ over binary variables:\begin{equation}\label{epigraph}
	E=\{(\theta,\bx)\in \bR\times X:\theta\geq Q'(\bx) \},
\end{equation}
where $X\subseteq \{0,1\}^n$. 
%
An important application motivating this study is stochastic mixed-integer programming (SMIP) \cite{ahmed2010two}, or
more generally mixed-integer linear programs (MILPs) with block diagonal structures
of the following form: 
\begin{equation}\label{block_diag_MILP}
	\begin{aligned}
		\min~&\bc^T\bx+\sum_{k=1}^N (\bd^k)^T\by^k\\
		\text{s.t. }&T^k\bx+W^k\by^k=\bh^k,~\by^k\geq \bzero,\quad k\in[N],\\
		&\bx\in X\subseteq\{0,1\}^n.
	\end{aligned}
\end{equation}
In the case of two-stage SMIPs, the binary variables $\bx$ represent first-stage decisions, $N$ is the number of scenarios representing the possible
outcomes, and for each  $k \in [N]:=\{1,\ldots,N\}$, the continuous decision variables $\by^k$ represent recourse
actions taken in response to observing the data $(\bd^k,T^k,W^k,\bh^k)$ in scenario $k$.
A common approach to solving such problems is Benders decomposition, which
solves a reformulation of the form
\begin{equation}\label{Benders_reformulation}
	\min_{\btheta,\bx}\Big\{\bc^T\bx+\sum_{k=1}^N \theta_k:\theta_k\geq Q_k(\bx)\text{ for }k\in[N],~\bx\in X \Big\},
\end{equation}
where  for $k\in[N]$, $\bx\in X$, 
\begin{equation}
	\label{eq:subprob}
	Q_k(\bx)=\min_\by\{(\bd^k)^T\by:T^k\bx+W^k\by=\bh^k,~\by\geq \bzero \} .
\end{equation}
The epigraph of $Q_k$ of the form
\eqref{epigraph} shows up as a substructure in \eqref{Benders_reformulation}. In Benders decomposition, valid
inequalities (Benders cuts \cite{bnnobrs1962partitioning,van1969shaped}) for this epigraph are derived via linear programming (LP) duality, but these are not generally
sufficient to define the convex hull of the epigraph, thus motivating the need to derive stronger valid inequalities for
sets of this form. This topic has been extensively studied both theoretically and computationally; see
\cite{chen2021generating,dey2018analysis,gade2014decomposition,laporte1993integer,ntaimo2013fenchel,rahmaniani2020benders,sen2005c,sen2006decomposition,zhang2014finitely}
as just a sample of the literature. Aside from SMIPs, this epigraph substructure appears in a variety of other 
problems (e.g., \cite{bertsimas2021unified,mendez2014branch,wiegele2007biq}).

We study a technique for generating inequalities for $E$ based on a disjunctive relaxation having many terms, specifically
obtained by enumerating all $2^{|I|}$ feasible values for a subset $I$ of the binary variables. 
Disjunctive programming has been a central tool in MILP since its origin in 1970s
\cite{balas1979disjunctive,balas1998disjunctive}. A disjunction is a union of sets, and if the feasible region of an
MILP is
contained within such a union, inequalities valid for the disjunction are valid for the MILP, and are referred to as disjunctive cuts. 
Most disjunctive cuts used in practice are based on two disjunctive terms, e.g., split cuts 
\cite{cook1990chvatal} and lift-and-project cuts \cite{balas1993lift,balas1996mixed,balas2002lift,ceria1998solving}. 
While there has been significant work on classes of cuts that are derived from multiple-term disjunctions
\cite{andersen2007cutting,basu2010maximal,cornuejols2009facets,dey2010experiments,li2008cook}, the current methods
remain focused on disjunctions with a relatively small number of terms. 
Perregaard and Balas \cite{perregaard2001generating} considered an iterative scheme for generating disjunctive cuts from many terms
(see \S \ref{sec:sparse01}), but the approach remains computationally demanding. 

Our proposal for generating multi-term disjunctive cuts more efficiently is based on restricting the support of the
generated cut to the index set $I$, the same set used to define the disjunctive terms. We refer to such cuts as
$I$-sparse cuts. Our approach is motivated by the desire to generate sparse cuts, which may lead to faster solution 
time of the LP relaxations. Recent studies have 
investigated the theoretical
strength of sparse cuts \cite{dey2015some,dey2015approximating,dey2018analysis}. Our use of sparsity is with
respect to the generated cut, which differentiates it from Fukasawa et al. \cite{FPY18} who empirically show that split
cuts derived from (two-term) split disjunctions defined by a sparse integer vector can close the majority of the split closure gap.

In \S \ref{sec:sparse01} we show that the proposed sparsity restriction enables generating multi-term disjunctive cut by solving a single
subproblem per term, and then solving a single cut-generating LP. Thus, while this remains a computationally
demanding cut generation process, we find empirically that it is feasible to use many more disjunctive terms than have
previously been considered. In \S \ref{sec:selection}, we propose two rules for selecting the support $I$ to
generate $I$-sparse inequalities. In \S \ref{sec:computational}, we present results of a computational study using
the $I$-sparse inequalities based on up to $2^{10}$ disjunctive terms on three test problems. We find that in many cases the $I$-sparse cuts close nearly as much gap as
multi-term disjunctive cuts without the sparsity restriction, and can be generated orders of magnitude faster. 
When incorporated into a Benders branch-and-cut solution method, we find that $I$-sparse cuts lead to faster solution
times or smaller ending gaps on our test instances.
Although we find that sparse cuts often can  close a significant portion of the optimality gap, we expect there are
problems where dense cuts may be needed. Thus, we explore in \S \ref{sec:tilting} how we can use our proposed technique
to optimally ``tilt'' a given (possibly dense) valid inequality by modifying a sparse subset of the coefficients of the
inequality. We make concluding remarks in \S \ref{sec:conc}.

A preliminary version of this work appeared in the conference publication \cite{chenipco2022}. In the this paper we
include proofs of the main results, report new results from experiments dynamically adjusting the cardinality of
the cut-support set and using the proposed cuts to solve problems to optimality within a cut-and-branch method, and derive the
approach for tilting a given valid inequality by modifying a sparse subset of its coefficients. Some notation has been changed to improve readability.


\section{Sparse multi-term 0-1 disjunctive cuts}\label{sec:sparse01}

We study the problem of generating valid inequalities for the epigraph $E$ defined in \eqref{epigraph}.
Without loss of generality, we assume the domain $X$ of the function $Q'$ is full-dimensional. (Otherwise, we can
project out certain variables to make the set full-dimensional after projection.) Let $R(X)$ be a (continuous)
relaxation of $X$ with $R(X)\cap \{0,1\}^n=X$. We assume we have access to an extension of $Q'$ to $R(X)$,
$Q:R(X)\rightarrow \mathbb{R}$, satisfying $Q(\bx) = Q'(\bx)$ for $\bx \in X$.
We require that minimizing $Q$ over $R(X)$ is efficiently solvable.
E.g., this would be the case if $R(X)$ is closed, convex, and equipped with an efficient separation oracle and $Q$ is convex over
$R(X)$ with efficiently computable subgradients. 
We emphasize that we do not expect $Q$ to be the convex envelope of $Q'$ over $X$ (i.e., $\conv(E)$) as our interest is precisely about
identifying valid inequalities to approximate this set. 
In case an efficiently computable exact extension $Q$ is is not readily available, one can use an extension $Q$
that instead satisfies $Q(\bx) \leq Q'(\bx)$ for $\bx \in X$. 
For example, in the case of an SMIP having
integer second-stage decisions, the exact recourse function $Q_k(\bx)$ is nonconvex and expensive to evaluate, in which case
one may use instead use the recourse function defined using an LP relaxation of the recourse problem. The strength of the resulting cuts
will naturally depend on the quality of the relaxation, which could for example be improved using standard
MILP valid inequalities.

The following example provides another illustration of the choice of $Q$. 
\begin{exmp}[\cite{li1994global}]
	Assume $Q':\{0,1\}^n\rightarrow\bR$ is defined by\begin{displaymath}
		Q'(\bx)=\frac{\ba^T\bx+b}{\bc^T\bx+d}
	\end{displaymath}
	with $\ba \in \mathbb{R}_+^n$, $\bc \in \mathbb{R}_+^n$ and $d>0$. The natural continuous extension of $Q'$ to $[0,1]^n$
	is not necessarily convex.  However, it is possible \cite{li1994global} to construct a convex extension of $Q'$ over $R(X)=[0,1]^n$ by
	introducing a new variable $y=1/(\bc^T\bx+d)\geq 0$ and linearization variables $z_{i}=x_iy\geq 0$ for $i\in[n]$, and
	define the function $Q:[0,1]^n\rightarrow\bR$ by 
	\begin{subequations}
		\label{eq:qexdef}
		\begin{align}
			Q(\bx)=\min_{y,\bz}~&\ba^T\bz+by\\
			\text{s.t. }&z_i-y\leq 0,~(c_i+d)z_i\leq x_i,~d(y-z_i)\leq 1-x_i,\quad i\in[n],\\
			&\bc^T\bz+dy=1, \bz \in \mathbb{R}_+^n, y\geq 0
		\end{align}
	\end{subequations}
	for $\bx \in [0,1]^n$.  Then, $Q$ is convex over $[0,1]^n$ (because it is the value function of a linear
	program with $\bx$ in the right-hand side of the constraints) and $Q(\bx) = Q'(\bx)$ for $\bx \in \{0,1\}^n$, and
	thus it is a convex extension of $Q'$. Finally, observe that evaluating $Q$ and obtaining a subgradient of
	$Q$ at a point $\bx$ can be accomplished by solving the linear program \eqref{eq:qexdef}. 
\end{exmp}

Let $E^{Q}:=\{(\theta,\bx)\in\bR\times R(X):\theta\geq Q(\bx) \}$ denote the epigraph of $Q$ over $R(X)$ and let $I$ be a nonempty
subset of $[n]$. 
We denote by $\bx_I$ the subvector of $\bx$ with indices $I$, and define
$\{0,1\}^I:=\{ \bx_I : x_i \in \{0,1\}, i \in I\}$.
For each given $\bchi \in \{0,1\}^I$, we define
\[ \hEIchi:=\{(\theta,\bx)\in E^{Q} :\bx_I=\bchi \} = \{ (\theta,\bx)\in\bR\times R(X):\theta\geq Q(\bx), \bx_I = \bchi \}. \]

We derive valid inequalities for $E$ by finding valid inequalities for the following multi-term disjunctive relaxation of $E$:
\begin{equation}\label{def:hEI}
	\hEI:=\bigcup_{\bchi\in \{0,1\}^I}\hEIchi.
\end{equation}
Since  $E \subseteq \hEI$, any inequality valid for $\hEI$ is also valid for $E$.
We call the relaxation $\hEI$ of $E$ a multi-term 0-1 disjunction, and any cut valid for $\hEI$ a
multi-term 0-1 disjunctive cut. 
We include $Q$ as a superscript in the notation $\hEIchi$ and $\hEI$ to emphasize that these relaxations depend
on the choice of the extension $Q$. These relaxations also depend on the choice of $R(X)$, i.e., the domain of $Q$, but we suppress this
dependence for notational convenience.
%

\subsection{Generating multi-term 0-1 disjunctive cuts}
By \eqref{def:hEI}, an inequality of the form $\pi_0\theta+\bpi^T \bx\geq\eta$ is valid for $\hEI$ if and only if\begin{equation}\label{condt:validity}
	\min_{\theta,\bx}\Big\{\pi_0\theta+\bpi^T\bx:(\theta,\bx)\in \hEIchi\Big\}\geq \eta\text{ for all }\bchi\in\{0,1\}^I.
\end{equation}
Therefore, to separate a point $(\hat{\theta},\hat{\bx})$ from $\hEI$, in principle one can solve the following problem:
\begin{subequations}\label{mdcLP}
	\begin{align}
		\min_{\pi_0,\bpi,\eta}~ & \pi_0\hat{\theta}+\bpi^T\hat{\bx}-\eta\\
		\text{s.t. } & \pi_0\theta+\bpi^T \bx \geq \eta,~\forall (\bx,\theta)\in\hEIchi, \bchi\in\{0,1\}^I, \label{mdcLP:rows}\\
		&\pi_0\geq 0, \|(\pi_0,\bpi)\|_1\leq 1, \label{mdcLP:norm}
	\end{align}
\end{subequations}
where \eqref{mdcLP:norm} is just one example of a normalization constraint that can be used to ensure the separation
problem has an optimal solution.

\begin{algorithm}[tb!]
	\SetAlgoLined
	\textbf{Input: }$I \subseteq [n]$\\
	\textbf{Output: }A valid inequality $\hat{\pi}_0\theta+\hat{\bpi}^T \bx\geq\hat{\eta}$ for $\hEI$\\
	Initialize a set $\hat{S}_{\bchi}$ as a subset of extreme points of $\hEIchi$ for each $\bchi\in\{0,1\}^I$;\\
	\Repeat{$\eta(\hat{\pi}_0,\hat{\bpi};\bchi)\geq\hat{\eta}$ for all $\bchi\in\{0,1\}^I$}{
		Compute an optimal solution $(\hat{\pi}_0,\hat{\bpi},\hat{\eta})$ of the LP:\label{alg:rmaster}
		\begin{displaymath}\label{full_multiterm}
			\begin{aligned}
				\min_{\pi_0,\bpi,\eta}\ &\pi_0\hat{\theta}+\bpi^T\hat{\bx}-\eta\\
				\text{s.t. } &\pi_0\theta+\bpi^T\bx \geq \eta,~~\forall (\theta,\bx)\in \hat{S}_\bchi,~\bchi\in\{0,1\}^I,\\
				&\pi_0\geq 0, \|(\pi_0,\bpi)\|_1\leq 1;
			\end{aligned}
		\end{displaymath}\\
		\For{$\bchi\in\{0,1\}^I$}{Solve\label{alg:rsep}\begin{equation}\label{def:RHS}
				\eta(\hat{\pi}_0,\hat{\bpi};\bchi):=\min_{(\theta,\bx)\in \hEIchi}\hat{\pi}_0\theta+\hat{\bpi}^T\bx;
			\end{equation}\\
			\If{$\eta(\hat{\pi}_0,\hat{\bpi};\bchi)<\hat{\eta}$}{Add an optimal solution $(\theta^*,\bx^*)$ of
				\eqref{def:RHS} into $\hat{S}_\bchi$;}
		}
	}
	\caption{The row generating algorithm for solving \eqref{mdcLP}}
	\label{alg:row_gen}
\end{algorithm}

Perregaard and Balas \cite{perregaard2001generating} suggest an iterative row generating algorithm for generating
multi-term disjunctive cuts. Adapting it to our multi-term 0-1 disjunction leads to Algorithm
\ref{alg:row_gen} for solving \eqref{mdcLP}. Specifically, the method alternates between solving a relaxation of
\eqref{mdcLP} defined by only including constraints \eqref{mdcLP:rows} for a (small) subset of the extreme points of
$\hEIchi$ for each $\bchi \in \{0,1\}^I$ (line \ref{alg:rmaster}), and then solving a subproblem for each $\bchi \in
\{0,1\}^I$ to determine if any of the excluded constraints in \eqref{mdcLP:rows} is violated (line \ref{alg:rsep}) and
adding one such constraint if so.
While this approach is guaranteed to yield a valid inequality
for $\hEI$ that cuts off $(\hat{\theta},\hat{\bx})$ when one exists, it is computationally demanding when the number
of terms is larger than just a few.
In particular, the scalability of the algorithm is limited by the multiplied effect of (a) the size of $\{0,1\}^I$, and
(b) the potential need to solve \eqref{def:RHS} multiple times for each $\bchi \in \{0,1\}^I$. Numerical experiments in
\cite{perregaard2001generating} generate valid inequalities for MILPs using only up to 16 disjunctive terms. In this
work, we propose to restrict attention to cuts supported on $I$, which we find eliminates the effect of (b).

\subsection{$I$-sparse inequalities}

We next explore how restricting the support of the generated cut can be used to accelerate the generation of multi-term 0-1 disjunction cuts for $\hEI$ for a fixed $I$.


\begin{defn}
	Let $I \subseteq [n]$. We say an inequality $\theta\geq \mu^T \bx+\eta$ is an $I$-sparse inequality(/cut) for $E$ if the following two conditions hold:\begin{enumerate}
		\item $\theta\geq \mu^T \bx+\eta$ is valid for $\hEI$;
		\item $\mu_i=0$ for all $i\notin I$.
	\end{enumerate}
\end{defn}

The following proposition characterizes $I$-sparse inequalities. 

\begin{prop}\label{prop:sparse}
	An inequality $\theta\geq \bmu^T \bx+\eta$ with $\mu_i=0$ for all $i\notin I$ is an $I$-sparse inequality for $E$ if and only if\begin{equation}\label{condt2:validity}
		\sum_{i\in I}\mu_i\chi_i+\eta\leq \nuIchi,~\forall \bchi\in\{0,1\}^I,
	\end{equation}
	where for each $\bchi \in \{0,1\}^I$,
	\begin{equation}\label{def:nu_I}
		\nuIchi := \min \{ Q(\bx): \bx \in R(X), \bx_I = \bchi\} .
	\end{equation}
\end{prop}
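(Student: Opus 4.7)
The plan is to reduce the validity condition over the disjunction $\hat{E}_I^Q$ to a family of single-term conditions, and then exploit the support restriction on $\bmu$ to evaluate each condition in closed form.

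First I would invoke the elementary observation (already written out as \eqref{condt:validity} in the paper) that $\theta \geq \bmu^T\bx + \eta$ is valid for $\hEI = \bigcup_{\bchi \in \{0,1\}^I} \hEIchi$ if and only if
\begin{equation*}
\min\bigl\{ \theta - \bmu^T\bx : (\theta,\bx) \in \hEIchi \bigr\} \geq \eta \quad \text{for every } \bchi \in \{0,1\}^I.
\end{equation*}
This is just the statement that a linear inequality holds on a union precisely when it holds on each set in the union. Both directions of the biconditional in the proposition will be obtained by rewriting this inner minimization.

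The key step is to observe that whenever $(\theta,\bx) \in \hEIchi$, we have $\bx_I = \bchi$; combined with the hypothesis $\mu_i = 0$ for $i \notin I$, this yields $\bmu^T\bx = \sum_{i \in I} \mu_i \chi_i$, a constant depending only on $\bchi$. Pulling this constant out of the minimization and then projecting out $\theta$ (the constraint $\theta \geq Q(\bx)$ is tight at the optimum since the objective is minimizing $\theta$) collapses the inner problem to
\begin{equation*}
\min\bigl\{ Q(\bx) : \bx \in R(X),\ \bx_I = \bchi \bigr\} - \sum_{i \in I} \mu_i \chi_i = \nuIchi - \sum_{i \in I} \mu_i \chi_i,
\end{equation*}
by the definition of $\nuIchi$ in \eqref{def:nu_I}. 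Thus the validity condition reduces exactly to $\nuIchi - \sum_{i \in I} \mu_i \chi_i \geq \eta$, which is \eqref{condt2:validity} after rearrangement.

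I do not anticipate a real obstacle here; the proof is a direct calculation once the support restriction is used. The only points requiring care are boundary cases: if $\hEIchi$ is empty then $\nuIchi = +\infty$ and the condition is vacuous for that $\bchi$, while if the inner minimum is $-\infty$ then no finite $(\bmu,\eta)$ can satisfy the original validity requirement either, so both conventions are consistent. Writing out the two implications using the displayed rewriting then finishes the proof.
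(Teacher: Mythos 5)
Your proof is correct and follows essentially the same route as the paper's: both reduce validity over the disjunction $\hEI$ to the term-wise condition \eqref{condt:validity}, use the support restriction $\mu_i=0$ for $i\notin I$ together with $\bx_I=\bchi$ to pull $\bmu^T\bx=\sum_{i\in I}\mu_i\chi_i$ out of the minimization, and project out $\theta$ to recover $\nuIchi$. Your extra remark on the empty/unbounded boundary cases is a harmless addition the paper omits (it assumes $Q$ finite-valued on $R(X)$).
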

\begin{proof}
	We only need to show that \eqref{condt:validity} with $\pi_0=1$, $\bpi_I=-\mu_I$ and $\bpi_{[n]\setminus I}=\bzero$ holds if
	and only if \eqref{condt2:validity} holds. This is straightforward by observing that for each $\bchi\in\{0,1\}^I$, 
	\begin{align*}
		\min_{\theta,\bx}\Big\{\theta-&\sum_{i\in I}\mu_ix_i:(\theta,\bx)\in \hEIchi\Big\} \\
		&=\min_{\theta,\bx}\Big\{\theta-\sum_{i\in I}\mu_i\chi_i:(\theta,\bx)\in \hEIchi\Big\} \\
		&=\min_{\theta,\bx}\Big\{\theta:(\theta,\bx)\in \hEIchi\Big\} -\sum_{i\in I}\mu_i\chi_i \\
		&=\min_{\theta,\bx}\Big\{\theta: \theta \geq Q(\bx), \bx \in R(X), x_I = \bchi \Big\} -\sum_{i\in I}\mu_i\chi_i \\
		&=\nuIchi-\sum_{i\in I}\mu_i\chi_i.\quad\qed
	\end{align*}
\end{proof}
Observe that the problem \eqref{def:nu_I} has a similar form as \eqref{def:RHS} which is used
when applying the Perregaard and Balas algorithm \cite{perregaard2001generating}  to solve  \eqref{mdcLP}.
We use $Q$ in the superscript of the notation $\nuIchi$ to continue to emphasize the dependence of this quantity on $Q$.
This quantity also depends on $R(X)$, but we suppress this dependence for notational convenience. 

\begin{rmk}
	Our presentation focuses on the case where $X \subseteq \{0,1\}^n$. However, the result naturally extends to problems
	with bounded integer variables by enumerating all possible combinations of values a subset of the bounded integer variables.
\end{rmk}

The following result provides a condition under which every nontrivial
valid inequality for $E$ with coefficients supported on the index set $I$ is an $I$-sparse inequality.
\begin{cor}\label{cor:supportonI}
	If $X=\{0,1\}^n$, $R(X)=[0,1]^n$ and $Q$ is component-wise monotonically nonincreasing or nondecreasing on $R(X)$,
	then an inequality $\theta\geq \bmu^T \bx+\eta$ with $\mu_i=0$ for all $i\notin I$ is valid for $E$ if and only if it is an $I$-sparse inequality.
\end{cor}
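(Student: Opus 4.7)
The statement is an ``if and only if,'' so I would split the argument into two directions, with the forward direction being essentially free and the reverse direction being where the monotonicity hypothesis is actually used.

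For the ``if'' direction, I would simply observe that $E \subseteq \hEI$ (as noted right after equation \eqref{def:hEI}), so any inequality valid for $\hEI$ is automatically valid for $E$. Since an $I$-sparse inequality is by definition valid for $\hEI$, no additional work is required.

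For the ``only if'' direction, the plan is to use Proposition \ref{prop:sparse}. So I need to show that if $\theta \geq \bmu^T\bx + \eta$ (with $\mu_i = 0$ for $i \notin I$) is valid for $E$, then for every $\bchi \in \{0,1\}^I$, the inequality $\sum_{i\in I}\mu_i\chi_i+\eta \leq \nuIchi$ holds. Fix $\bchi \in \{0,1\}^I$ and let $\bx \in [0,1]^n$ with $\bx_I = \bchi$ be arbitrary. The key step is to produce an integer point $\bar\bx \in \{0,1\}^n$ with $\bar\bx_I = \bchi$ such that $Q(\bar\bx) \leq Q(\bx)$; having this, I can finish by invoking validity of the cut at the point $(Q(\bar\bx), \bar\bx) = (Q'(\bar\bx), \bar\bx) \in E$, which gives $Q(\bx) \geq Q(\bar\bx) = Q'(\bar\bx) \geq \bmu^T\bar\bx + \eta = \sum_{i\in I}\mu_i\chi_i + \eta$, since $\mu_i = 0$ for $i \notin I$. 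Minimizing over such $\bx$ then yields the required bound on $\nuIchi$.

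The construction of $\bar\bx$ is where monotonicity enters, and it is the only nontrivial step. If $Q$ is componentwise nonincreasing, I would set $\bar x_i = 1$ for $i \notin I$ and $\bar x_i = \chi_i$ for $i \in I$; then $\bar\bx \geq \bx$ componentwise, so $Q(\bar\bx) \leq Q(\bx)$. If $Q$ is componentwise nondecreasing, I would set $\bar x_i = 0$ for $i \notin I$ instead, so that $\bar\bx \leq \bx$ and again $Q(\bar\bx) \leq Q(\bx)$. In either case $\bar\bx \in \{0,1\}^n = X$ with $\bar\bx_I = \bchi$, completing the argument. I do not anticipate a substantial obstacle here; the main subtlety is just recognizing that monotonicity is exactly the tool that lets a valid inequality on the integer points $X$ ``control'' $Q$ on the continuous relaxation $R(X)$ within each slice $\bx_I = \bchi$.
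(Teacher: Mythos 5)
Your proof is correct and follows essentially the same route as the paper: both arguments reduce validity for $E$ to the condition of Proposition \ref{prop:sparse} and then use monotonicity to show that the minimum of $Q$ over the box $\{\bx\in[0,1]^n:\bx_I=\bchi\}$ equals the minimum over its binary points, the paper citing this as a known fact about monotone functions on boxes and you making it explicit by constructing the dominating vertex $\bar\bx$. The only cosmetic difference is that you dispatch the ``if'' direction separately via $E\subseteq\hEI$, whereas the paper folds both directions into one chain of equalities.
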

\begin{proof}
	As in the proof of Proposition \ref{prop:sparse}, an inequality $\theta\geq \bmu^T \bx+\eta$ with $\bmu_{[n] \setminus I}=\mathbf{0}$ is valid for $E$ if and only if\begin{displaymath}
		\min\{Q(\bx):\bx_I=\bchi, \bx\in\{0,1\}^n \}\geq \sum_{i \in I} \mu_i\chi_i+\eta,~\forall \bchi\in\{0,1\}^I.
	\end{displaymath}
	The conclusion then follows by observing that, for each $\bchi \in \{0,1\}^I$, 
	\begin{align*}
		\nuIchi &=\min\bigl\{Q(\bx):\bx_I=\bchi, \bx\in[0,1]^n \bigr\}\\
		&=\min\bigl\{Q(\bx):\bx_I=\bchi, \bx\in\{0,1\}^n \bigr\},
	\end{align*}
	where the last equality is due to the fact that a monotone function over a box always attains its minimum at an extreme point.\qed
\end{proof}
When the assumptions of Corollary \ref{cor:supportonI} do not hold, 
Proposition \ref{prop:sparse} still provides a method for separating for $I$-sparse cuts -- we just cannot
assure in this case that $I$-sparse cuts contain all cuts supported only on $I$.

Based on Proposition \ref{prop:sparse}, for a fixed $I$, the separation problem for $I$-sparse inequalities for a 
point $(\hat{\theta},\hat{\bx})$ can be solved by solving the LP 
\begin{equation}\label{CGLP}
	\gI=\max\Big\{\sum_{i\in I}\mu_i\hat{x}_i+\eta:\sum_{i\in I}\mu_i\chi_i+\eta\leq \nuIchi,~\bchi\in\{0,1\}^I \Big\}.
\end{equation}
Specifically, the optimal solution of \eqref{CGLP} defines an inequality that cuts off $(\hat{\theta},\hat{\bx})$ if and
only if $\gI > \hat{\theta}$.
When it is easy to determine whether or not a vector is in $\proj_I(X)$, we can replace $\bchi\in\{0,1\}^I$ in
\eqref{CGLP} with $\chi\in\proj_I(X)$ since $\nuIchi=+\infty$ if $\chi\notin\proj_I(X)$. Since $Q$ is finite valued
in $R(X)$, $\nuIchi \in\bR$ for $\chi\in\proj_I(X)$. When $\hat{\bx}_I\in\conv(\proj_I(X))$, the LP \eqref{CGLP} is
guaranteed to have an optimal solution since $X$ being full-dimensional implies that $\proj_I(X)$ is full-dimensional.
When $\hat{\bx}_I\notin\conv(\proj_I(X))$, $(\hat{\theta},\hat{\bx})$ can be cut off by an inequality separating
$\hat{\bx}_I$ from $\proj_I(X)$. 

The main work to generate an $I$-sparse inequality is evaluating $\nuIchi$ by solving \eqref{def:nu_I} for each $\bchi
\in \{0,1\}^I$, and then solving the LP \eqref{CGLP} once.
Note that \eqref{CGLP} has $|I|+1$ variables in contrast to $n+2$ variables in
the problem \eqref{mdcLP} used in the Perregaard and Balas (PB) \cite{perregaard2001generating} algorithm,
and requires solving at most $2^{|I|}$ subproblems of the form \eqref{def:nu_I}, in contrast to the PB algorithm which
solves $2^{|I|}$ subproblems of this form in multiple iterations until convergence.


\subsection{Accelerating the evaluation of $\nuIfn$}
\label{sec:block_diag}

Evaluating $\nuIchi$ for all $\bchi \in \{0,1\}^I$ is the most significant computational component  of generating an
$I$-sparse inequality. 
We discuss techniques to potentially accelerate this evaluation, focusing on our motivating example of MILPs with
block diagonal structures \eqref{block_diag_MILP}. In this context, assume $R(X)=\{\bx \in \mathbb{R}^n :A\bx\leq
\mathbf{b} \}$ is a polyhedral
relaxation of $X$ and assume bound constraints $0 \leq x_i \leq 1$ are included in $A\bx \leq \mathbf{b}$. For a fixed $k \in [N]$
let $Q_k(\bx)$ be as defined in \eqref{eq:subprob}
and assume $Q_k(\bx)$ is
finite valued for all $\bx\in R(X)$. 
In this case, when generating an $I$-sparse inequality for the set $E_k=\{(\theta_k,\bx)\in \bR\times X:\theta_k \geq
Q_k(\bx) \}$
the evaluation of $\nuIchi$ for $\bchi \in \{0,1\}^I$ can be formulated as the following LP
\begin{equation}
	\label{eq:nurecourse}
	\nuIchi=\min_{\bx,\by}\{(\bd^k)^T\by^k:T^k\bx+W^k\by=\bh^k,~\by\geq \bzero,~A\bx\leq \mathbf{b},~\bx_I=\bchi\}. 
\end{equation}
A first simple idea for accelerating the solution of \eqref{eq:nurecourse} for all $\bchi \in \{0,1\}^I$
is to exploit the possibility to warm-start these LPs (see, e.g., \cite{bertsimas1997introduction} for background). LP solvers like Gurobi
\cite{gurobi} automatically implement a simplex warm start when only variable bounds are
changed in a LP. Thus, solving the sequence of problems \eqref{eq:nurecourse} for $\bchi \in \{0,1\}^I$ by making
changes to variable bounds implied by the constraints $\bx_I = \bchi$ will naturally benefit from these warm-start
capabilities.  This motivates a careful selection of the sequence these problems are solved in. For example, by
following the sequence defined by a Gray code \cite{gray1953pulse}, at most one variable bound will change from one
subproblem to the next. 

We do not explore this in our computational study, but another possibility for reducing the time required for
evaluating $\nuIchi$ is to use a simpler to evaluate lower bound on $Q$. E.g., for MILPs with block
diagonal structure, a lower bound on $\nu_I^{Q_k}(\bchi)$ is obtained by solving a problem of the form:
\[ \nu_I^{\hat{Q}_k}(\bchi) = \min \{ \hat{Q}_k(\bx) : \bx \in R(X), \bx_I = \bchi \} \]
where $\hat{Q}_k$ is the current piecewise-linear convex lower bound of $Q_k$ defined by Benders cuts. These lower
bounds could then be used in \eqref{CGLP} which would yield a valid but potentially weaker inequality. This inequality
could then be improved by exactly evaluating $\nu_I^{Q_k}(\bchi)$  for the $\chi$ that correspond to binding
constraints in \eqref{CGLP}, and then re-solving \eqref{CGLP} with these improved values. 

Finally, we note that after evaluating $\nuIchi$ for $\chi \in \{0,1\}^I$ for a given set $I$ (and potentially 
adding a cut based on solving \eqref{CGLP}), we recommend storing these values for future use. In particular, after
re-solving the LP relaxation after addition of cuts and obtaining a new candidate relaxation solution
$(\hat{\theta},\hat{\bx})$, it may be possible that solving \eqref{CGLP} again {\it for the same} set $I$ can lead to a
new violated inequality. Storing the values $\nuIchi$ for $\chi \in \{0,1\}^I$ avoids needing to re-calculate them, so
that only \eqref{CGLP} needs to be solved to determine if such a violated inequality exists.

\section{Two selection rules for the support $I$}\label{sec:selection}

We now discuss techniques for choosing the set $I$ when generating $I$-sparse cuts. Given a point
$(\hat{\theta},\hat{\bx})$, the goal is to select $I$ in order to maximize the cut violation $\gI$ (defined in
\eqref{CGLP}).
Since the complexity of generating these cuts grows exponentially with $|I|$
we investigate techniques that choose $I$ satisfying $|I| \leq K$ for some fixed (small) integer $K$.  We describe two selection
rules that are derived from two different approximations of $Q$.

\subsection{A greedy rule based on a monotone submodular approximation}\label{subsec:greedy}

The problem of choosing $I$ that maximizes $\gI$  is a set function optimization problem. For notational convenience, we
do not distinguish between a set function and a function with binary variables, i.e., we interchangeably use $f(A)$ for
$f(\chi_{A})$ for all $A\subseteq[n]$ where $\chi_{A}\in\{0,1\}^n$ is the indicator vector of $A$. One particular class of set functions satisfying good theoretical properties is monotone submodular functions \cite{grotschel2012geometric}. 
\begin{defn}
	A function $f:2^{[n]}\rightarrow \bR$ is monotone submodular if it satisfies the following two conditions:\begin{enumerate}
		\item (Monotonicity) If $S\subseteq T\subseteq [n]$, then $f(S)\leq f(T)$;
		\item (Submodularity) If $S\subseteq[n]$, $j,~k\in[n]\setminus S$ and $j\neq k$, then $f(S\cup\{j\})+f(S\cup\{k\})\geq f(S\cup\{j,k\})+f(S)$.
	\end{enumerate}
\end{defn}
Given $\hat{\bx}\in[0,1]^n$, we can show that the cut violation function $\gI$ is monotone submodular in $I$ if $Q'$ is
monotone submodular and its extension $Q$ is component-wise monotonically nondecreasing.

\begin{prop}\label{prop:submodular}
	Assume $X=\{0,1\}^n$, $R(X)=[0,1]^n$, $Q'$ is monotone submodular on $X$, and its extension $Q$  is component-wise
	monotonically nondecreasing on $R(X)$. Then the cut violation function $\gx$ is monotone submodular.
\end{prop}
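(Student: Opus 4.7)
The plan is to use the probabilistic (``threshold'') representation of the Lov\'asz extension, which will make monotonicity and submodularity transparent once $\gI$ has been reformulated via LP duality.

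First, I would reformulate $\nuIchi$ using the monotonicity of $Q$. Since $Q$ is componentwise monotonically nondecreasing on $R(X) = [0,1]^n$, the minimization in \eqref{def:nu_I} is attained at $\bx_{[n]\setminus I} = \bzero$, giving $\nuIchi = Q(\bchi, \bzero) = Q'(\bchi, \bzero) = Q'(A_\bchi)$, where $A_\bchi := \{i \in I : \chi_i = 1\}$, and using the set-function convention of \S\ref{sec:selection}. The restriction $Q'|_{2^I}$ inherits monotone submodularity from $Q'$. Taking the LP dual of \eqref{CGLP} with $\nuIchi$ rewritten as $Q'(A_\bchi)$ yields
\begin{equation*}
\gx(I) = \min\Bigl\{\sum_{A \subseteq I} \lambda_A\, Q'(A) : \sum_{A \ni i} \lambda_A = \hat{x}_i \text{ for } i \in I,\ \sum_A \lambda_A = 1,\ \lambda \geq 0\Bigr\},
\end{equation*}
which is exactly the convex envelope of $Q'|_{2^I}$ evaluated at $\hat{\bx}_I$. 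Since $Q'|_{2^I}$ is submodular, Lov\'asz's theorem identifies this value with the Lov\'asz extension, which admits the threshold representation
\begin{equation*}
\gx(I) = \int_0^1 Q'\bigl(\tau_I(t)\bigr)\, dt, \qquad \text{where } \tau_I(t) := \{i \in I : \hat{x}_i \geq t\}.
\end{equation*}

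Monotonicity of $\gx$ is immediate: if $I \subseteq J$, then $\tau_I(t) \subseteq \tau_J(t)$ for every $t$, and monotonicity of $Q'$ gives $Q'(\tau_I(t)) \leq Q'(\tau_J(t))$, so $\gx(I) \leq \gx(J)$ after integration. For submodularity, fix $S \subseteq [n]$ and distinct $j, k \notin S$, and let $T(t) := \tau_S(t)$. A short case analysis on whether $\hat{x}_j \geq t$ and whether $\hat{x}_k \geq t$ shows that the integrand of $\gx(S \cup \{j\}) + \gx(S \cup \{k\}) - \gx(S) - \gx(S \cup \{j,k\})$ equals $Q'(T(t) \cup \{j\}) + Q'(T(t) \cup \{k\}) - Q'(T(t)) - Q'(T(t) \cup \{j,k\})$ when both $\hat{x}_j \geq t$ and $\hat{x}_k \geq t$, and equals $0$ in the remaining three cases; the former is nonnegative by submodularity of $Q'$. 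Integrating yields $\gx(S \cup \{j\}) + \gx(S \cup \{k\}) \geq \gx(S) + \gx(S \cup \{j,k\})$. With this approach there is no serious obstacle; the key step is recognizing $\gx(I)$ as the Lov\'asz extension of a submodular set function and then using the threshold form, which avoids the messier bookkeeping of sorted orderings that would arise from direct manipulation of the closed-form sorted-prefix expansion.
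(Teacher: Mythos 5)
Your proof is correct, and its skeleton matches the paper's: reduce $\nuIchi$ to the value of $Q'$ at a vertex using component-wise monotonicity of $Q$, identify $\gI$ with the convex envelope of the restricted set function at $\hat{\bx}_I$ (you via LP duality of \eqref{CGLP}, the paper via Rockafellar's characterization of convex envelopes by affine underestimators), and then invoke Lov\'asz's theorem that for a submodular set function the convex envelope is the Lov\'asz extension. Where you genuinely diverge is in the final verification: the paper writes out the sorted-prefix closed form of the Lov\'asz extension for each of the four sets $S$, $S\cup\{j\}$, $S\cup\{k\}$, $S\cup\{j,k\}$ and compares the resulting sums term by term, whereas you use the threshold (Choquet-integral) representation $\int_0^1 Q'(\tau_I(t))\,dt$ and reduce both monotonicity and submodularity to a pointwise-in-$t$ statement. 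Your version buys a cleaner case analysis (the integrand of the submodularity defect is either zero or a single application of submodularity of $Q'$) and avoids the reordering bookkeeping; it also makes transparent that only submodularity of $Q'$, not its monotonicity, is needed for the submodularity of $\gx$. The paper's monotonicity argument is slightly more robust than yours --- it observes that any $I$-sparse inequality is $I'$-sparse for $I\subseteq I'$, so $\gx(I)\le\gx(I')$ follows from feasible-region containment in the separation LP without any structural assumptions on $Q'$ --- but under the stated hypotheses both arguments are valid.
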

\begin{proof}
	The monotonicity of $\gx$ is obvious since an $I$-sparse inequality is also $I'$-sparse for any $I\subseteq I'\subseteq [n]$. We only need to show submodularity of $\gx$.
	
	For all $I\subseteq[n]$, let $Q_I:\{0,1\}^I\rightarrow\bR$ be the function with $Q_I(\chi_S)=Q'(S)=Q(\chi_S)$ for all $S\subseteq I$ and
	let $\bar{Q}_I:[0,1]^I\rightarrow\bR$ denote the convex envelope of $Q_I$ on $[0,1]^I$. By component-wise monotonicity of $Q$ on $R(X)$,
	\begin{align*}
		\nuIchi=\min\{Q(\bx):\bx_I=\bchi, \bx\in\{0,1\}^n \}=Q_I(\bchi),~\forall \bchi\in\{0,1\}^I.
	\end{align*}
	Therefore,
	\begin{align}
		\gI=\max_{\bmu,\eta} & \sum_{i\in I}\mu_i\hat{\bx}_i+\eta,\nonumber\\
		\text{s.t. } & \sum_{i\in I}\mu_i\chi_i+\eta\leq Q_I(\bchi),~\forall \bchi\in\{0,1\}^I. \label{con:affine_underest}
	\end{align}
	Then by \cite[Corollary 12.1.1]{rockafellar1970convex}, $\gI=\bar{Q}_I(\hat{\bx}_I)$ as
	\eqref{con:affine_underest} characterizes all affine underestimates of $Q_I$. Note that $Q_I$ is submodular by
	submodularity of $Q$. Then the convex envelope $\bar{Q}_I$ is characterized by the Lov\'{a}sz extension
	\cite{lovasz1983submodular} of $Q_I$. We are now ready to show that $\gx$ is submodular. Let $S\subseteq[n]$ and
	$j,k\in[n]\setminus S$ such that $j\neq k$. Let $i_1,\ldots,i_m$ be a reordering of elements in $S\cup \{j,k\}$ such
	that $\hat{x}_{i_1}\geq\ldots\geq\hat{x}_{i_m}$. Define $S_l:=\{i_1,\ldots,i_l\}$ for $l=1,\ldots,m$. Using Lov\'{a}sz extensions of $Q_{S}$, $Q_{S\cup\{j\}}$, $Q_{S\cup\{k\}}$ and $Q_{S\cup\{j,k\}}$, we have\begin{enumerate}
		\item $\gx(S\cup\{j,k\})=(1-\hat{x}_{i_1})Q(\emptyset)+\sum_{l=1}^{m-1}(\hat{x}_{i_l}-\hat{x}_{i_{l+1}})Q(S_i)+\hat{x}_{i_m}Q(S_{i_m})$;
		\item $\gx(S\cup\{j\})=(1-\hat{x}_{i_1})Q(\emptyset)+\sum_{l=1}^{m-1}(\hat{x}_{i_l}-\hat{x}_{i_{l+1}})Q(S_i\setminus\{k\})+\hat{x}_{i_m}Q(S_{i_m}\setminus\{k\})$;
		\item $\gx(S\cup\{k\})=(1-\hat{x}_{i_1})Q(\emptyset)+\sum_{l=1}^{m-1}(\hat{x}_{i_l}-\hat{x}_{i_{l+1}})Q(S_i\setminus\{j\})+\hat{x}_{i_m}Q(S_{i_m}\setminus\{j\})$;
		\item $\gx(S)=(1-\hat{x}_{i_1})Q(\emptyset)+\sum_{l=1}^{m-1}(\hat{x}_{i_l}-\hat{x}_{i_{l+1}})Q(S_i\setminus\{j,k\})+\hat{x}_{i_m}Q(S_{i_m}\setminus\{j,k\})$.
	\end{enumerate}
	Note that $Q(S_i\setminus\{k\})+Q(S_i\setminus\{j\})\geq Q(S_i)+Q(S_i\setminus\{j,k\})$ for $i=1,\ldots,k$ due to
	submodularity and monotonicity of $Q$. It follows that $\gx(S\cup\{j\})+\gx(S\cup\{k\})\geq \gx(S\cup\{j,k\})+\gx(S)$,
	i.e., $\gx$ is submodular.\qed
\end{proof}

\begin{algorithm}[tb!]
	\SetAlgoLined
	\textbf{Input: }$\hat{\bx},~K$\\
	\textbf{Output: }$I$\\
	Initialize $I\leftarrow \emptyset$\\
	\While{$|I|\leq K$}{
		Evaluate $\gx(I\cup\{i\})$ for each $i \notin I$;\\
		$I\leftarrow I\cup\{i^*\}$ where $i^*\in\arg\max_{i\notin I}\gx(I\cup\{i\})$;
	}
	\caption{Greedy algorithm for choosing $I$}
	\label{alg:greedy}
\end{algorithm}

Although maximizing a monotone submodular function subject to a cardinality constraint is NP-hard
\cite{cornuejols1977exceptional} in general, the well-known greedy algorithm of Nemhauser et al.
\cite{nemhauser1978analysis} attains a $1-1/e$ approximation ratio to this problem.
For maximizing $\gI$ subject to a cardinality constraint $|I|\leq K$, the greedy algorithm is described in Algorithm \ref{alg:greedy}.
However, directly applying a greedy algorithm for choosing $I$ may not be a good choice because (i) 
the assumptions of Proposition \ref{prop:submodular} may not hold, and (ii) the greedy algorithm requires evaluating $\gx$ many times, which is
computationally expensive. Therefore, we seek alternatives to this approach by applying the greedy algorithm to a
different cut violation function $\gxq{\tilde{Q}}$ associated with function $\tilde{Q}:[0,1]^{n}\rightarrow\bR$, whose restriction $\tilde{Q}'$ on $\{0,1\}^I$ is an approximation of the function $Q'$. We choose $\tilde{Q}$ 
such  that $\tilde{Q}'$ is monotone and submodular and the cut violation $\gxq{\tilde{Q}}$ can be evaluated much more efficiently than $\gx$.

\sloppypar{We propose to use $\tilde{Q}$ of the form $\tilde{Q}(\bx) = \max_{i\in[n]}\{a_ix_i+b\}$ 	with
	$0\leq a_1\leq\ldots\leq a_n$ (after complementing and reordering some variables). With this form, $\tilde{Q}$ is
	component-wise nondecreasing on $R(X)$ and $\tilde{Q}'$ (the restriction of $\tilde{Q}$ to $X$) is monotone
	submodular on $X$, and thus the associated approximation $\gxq{\tilde{Q}}$ is monotone submodular. 
	To construct such an approximation $\tilde{Q}$, we use the $I$-sparse inequalities with $I=\{i\}$ for each $i\in[n]$.
	When $I=\{i\}$, the polyhedron defined by \eqref{condt2:validity} has a unique extreme point $\big(\nu^Q_{\{i\}}(1)-\nu^Q_{\{i\}}(0),\nu^Q_{\{i\}}(0)\big)$, which corresponds to a valid inequality of $E$: 
	\begin{equation}\label{ineq:1-d}
		\theta\geq \big(\nu^Q_{\{i\}}(1)-\nu^Q_{\{i\}}(0)\big)x_i+\nu^Q_{\{i\}}(0).
	\end{equation}
	By complementing the variable $x_i \leftarrow 1-x_i$ if necessary, we may assume that $\nu^Q_{\{i\}}(1) \geq
	\nu^Q_{\{i\}}(0)$.  Thus, $Q(\bx) \geq \text{LB}^* := \max_{i\in[n]} \nu^Q_{\{i\}}(0)$ for all $\bx \in \{0,1\}^n$. Therefore, we can strengthen \eqref{ineq:1-d} to be 
	$\theta\geq \big(\tilde{\nu}^Q_{\{i\}}(1)-\text{LB}^*\big)x_i+\text{LB}^*$, where $\tilde{\nu}^Q_{\{i\}}(1) =
	\max\{\nu^Q_{\{i\}}(1),\text{LB}^*\}$ for all $i \in [n]$. 
	We thus obtain inequalities of the form $\theta\geq a_ix_i+b$ for $i\in[n]$ with $a_i \geq 0$, which are valid for $E$
	(modulo the mentioned complementing of the $x_i$ variables as needed). Assuming without loss of generality that $a_1\leq\ldots\leq a_n$, we obtain the desired approximation
	$\tilde{Q}$.}

We next discuss how to use $\gxq{\tilde{Q}}$ to generate a support within 
Algorithm \ref{alg:greedy}. In particular, we discuss how to efficiently evaluate  $\gxq{\tilde{Q}}(I)$ for a subset $I$.
Define $\tilde{Q}_I$: $\{0,1\}^I \rightarrow
\mathbb{R}$  by
\[ \tilde{Q}_I(\bx) = \max_{i \in I} \{ a_i x_i + b \} \quad \text{ for all } \bx \in \{0,1\}^I \]
and let $\bar{Q}_I$ be the convex envelope of $\tilde{Q}_I$ over $[0,1]^I$.
As discussed in the proof of Proposition \ref{prop:submodular}, submodularity of $\gxq{\tilde{Q}}$ implies that 
$\gxq{\tilde{Q}}(I)=\bar{Q}_I(\hat{\bx}_I)$. The convex envelope of $\tilde{Q}_I$ over
$[0,1]^I$ is the convex hull of the set:
\begin{equation}
	\label{epigraph:1d}
	F_I=\{(\theta,\bx)\in\bR\times\{0,1\}^I:\theta\geq a_ix_i+b,~i \in I \}.
\end{equation}
The convex hull of this set has been characterized in \cite{atamturk2000mixed,gunluk2001mixing}.
\begin{thm}[\cite{atamturk2000mixed,gunluk2001mixing}]
	Assume $I=\{1,2,\ldots,d\}$ with $0\leq a_1\leq\ldots\leq a_d$. Then\begin{multline}
		\label{eq:mixing}
		\conv(F_I)=\{(\theta,\bx)\in\bR\times[0,1]^d:\theta\geq a_{i_1}x_{i_1}+\sum_{k=2}^m(a_{i_k}-a_{i_{k-1}})x_{i_k}+b,\\
		\text{for all subsequences }(i_k)_{k=1}^m\text{ of }[d]\text{ such that }1\leq i_1\leq\ldots\leq i_m=d \}.
	\end{multline}
\end{thm}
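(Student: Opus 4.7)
The plan is to derive \eqref{eq:mixing} from the Lov\'{a}sz extension of a monotone submodular set function, leveraging machinery already developed earlier in this section. I would first recognize that $F_I$ is the epigraph, over $\{0,1\}^I$, of $\tilde{Q}_I(\bx) := \max_{i \in I}(a_i x_i + b)$, so $\conv(F_I)$ is the epigraph, over $[0,1]^I$, of the convex envelope $\bar{Q}_I$ of $\tilde{Q}_I$. Evaluating at $\bchi_S$ gives $\tilde{Q}_I(\bchi_S) = b + \max(\{a_i : i \in S\} \cup \{0\})$, which is nondecreasing and submodular by a direct computation analogous to the one in the proof of Proposition \ref{prop:submodular}. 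By Lov\'{a}sz's theorem \cite{lovasz1983submodular}, $\bar{Q}_I$ agrees on $[0,1]^I$ with the Lov\'{a}sz extension $\hat{\tilde{Q}}_I$, which admits the standard greedy representation
\begin{displaymath}
\hat{\tilde{Q}}_I(\bx) \;=\; b + \max_{\pi} \, (\bmu^\pi)^T \bx, \qquad \mu^\pi_{\pi(k)} \,:=\, \tilde{Q}_I(S^\pi_k) - \tilde{Q}_I(S^\pi_{k-1}),
\end{displaymath}
where $\pi$ ranges over permutations of $[d]$ and $S^\pi_k := \{\pi(1),\ldots,\pi(k)\}$.

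Next I would identify each greedy vector $\bmu^\pi$ with a subsequence of $[d]$. Setting $a_0 := 0$ and $\pi(0) := 0$, the increment $\mu^\pi_{\pi(k)}$ equals $a_{\pi(k)} - \max_{0 \leq j < k}a_{\pi(j)}$ when this is positive, and $0$ otherwise. The nonzero positions of $\bmu^\pi$, listed in the order they appear along $\pi$, therefore form an increasing index sequence $1 \leq i_1 < \cdots < i_m \leq d$ with $a_{i_1} < \cdots < a_{i_m} = a_d$, and the associated values are $\mu^\pi_{i_1} = a_{i_1}$ and $\mu^\pi_{i_k} = a_{i_k} - a_{i_{k-1}}$ for $k \geq 2$. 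In particular, each inequality $\theta \geq b + (\bmu^\pi)^T \bx$ is precisely a subsequence inequality of the form appearing in \eqref{eq:mixing}. Conversely, every subsequence $1 \leq i_1 < \cdots < i_m = d$ is realized by some permutation $\pi$ (place its entries first, then the remaining indices in any order).

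Combining these two steps,
\begin{displaymath}
\conv(F_I) \;=\; \{(\theta, \bx) \in \bR \times [0,1]^d : \theta \geq \hat{\tilde{Q}}_I(\bx)\},
\end{displaymath}
and the greedy representation identifies this polyhedron with the right-hand side of \eqref{eq:mixing}. The main obstacle is the careful bookkeeping required when ties occur among the $a_i$'s: the greedy subsequence may terminate at some $i_m < d$ with $a_{i_m} = a_d$, and distinct permutations can yield identical inequalities. Both issues are resolved by the observation that padding such a subsequence by appending $d$ is harmless since $a_d - a_{i_m} = 0$, so the family of subsequence inequalities in \eqref{eq:mixing} with $i_m = d$ covers every greedy-vector inequality without loss.
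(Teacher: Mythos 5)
The paper does not prove this theorem; it is quoted from the mixing-set literature \cite{atamturk2000mixed,gunluk2001mixing}, where the standard arguments establish validity via a sequential ``mixing'' of MIR inequalities and sufficiency via a polyhedral/extreme-point analysis. Your Lov\'asz-extension derivation is a genuinely different and, as far as I can tell, correct route: the set function $S\mapsto \max(\{a_i:i\in S\}\cup\{0\})$ is monotone submodular (diminishing returns follow because the marginal gain $(a_j-f(S))^+$ is nonincreasing in $S$), so by Lov\'asz's theorem its convex envelope over $[0,1]^d$ is the Lov\'asz extension, which by Edmonds' greedy theorem equals $\max_\pi(\bmu^\pi)^T\bx$ over the greedy base-polytope vertices; your bookkeeping correctly matches the nonzero greedy increments with the telescoping coefficients $a_{i_1}, a_{i_2}-a_{i_1},\ldots$ of the subsequence inequalities, and the tie/padding discussion (appending $d$ costs nothing since $a_d-a_{i_m}=0$, and the hypothesis $a_1\geq 0$ guarantees the first increment is exactly $a_{i_1}$) closes the correspondence in both directions. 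One small point worth making explicit is that $\conv(F_I)$ is closed (it is the convex hull of finitely many vertical rays sharing the recession direction $(1,\bzero)$, hence a polyhedron), so it really is the epigraph of the convex envelope of the restriction of $\tilde{Q}_I$ to $\{0,1\}^I$ rather than merely containing it. What your approach buys is a short, self-contained proof that reuses exactly the submodularity/Lov\'asz machinery the paper already deploys in the proof of Proposition \ref{prop:submodular}, making the theorem an instance of that framework rather than an imported black box; what the original mixing arguments buy is applicability beyond the pure-binary, nonnegative-slope setting (general mixed-integer mixing sets), which the submodular argument does not cover.
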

Thus, for a given $\hat{\bx}$, the problem of evaluating $\bar{Q}_I(\hat{\bx}_I)$ can be posed as
$ \min \{ \theta : (\theta,\hat{\bx}_I) \in \conv(F_I) \}$ 
which is equivalent to finding the inequality in the family of inequalities given in \eqref{eq:mixing} with maximimum right-hand side 
when evaluated at $\hat{\bx}_I$. This, in turn, is equivalent to the separation problem of this class of
inequalities,  which 
can be solved in polynomial time \cite{atamturk2000mixed,gunluk2001mixing}.  
We describe the application of the separation algorithm from \cite{gunluk2001mixing} to this context in Algorithm \ref{alg:separation}. Incorporating
this approach for evaluating $\gxq{\tilde{Q}}(I)$ into the greedy algorithm yields a much quicker method for choosing
$I$ than using the greedy algorithm with exact evaluation of $\gx(I)$. Indeed, the most significant work in this case is
solving the problem \eqref{def:nu_I} with $\chi=0$ and $\chi=1$ for each $i \in [n]$ to obtain the values 
$\nu^Q_{\{i\}}(1)$ and $\nu^Q_{\{i\}}(0)$ for $i \in [n]$, which only needs to be done once for the overall greedy
algorithm.

The proposed approximation may not lead to a good choice of $I$ when
$\tilde{Q}$ is not a good approximation of $Q$, in particular because the approximation $\tilde{Q}$ is based on the
maximum of affine lower bounding functions, each supported by a single variable. Thus, it is natural to consider using
affine functions with more general support to build the lower bounding approximation.
However, unless $P=NP$, the following result and the equivalence between optimization and separation \cite{grotschel1981ellipsoid} indicate that 
the key step of evaluating the convex envelope of the given function would no longer be efficiently solvable
even if the support of the inequalities defining the lower approximation of $Q$ were restricted to just two variables
per inequality.
\begin{prop}
	It is NP-hard to optimize a linear function over\begin{equation}\label{setOfInterest}
		\{(\theta,\bx)\in\bR\times\{0,1\}^n: \theta\geq \ba^T \bx+b,(\ba,b)\in\mathcal{A} \}
	\end{equation}
	even if $\|\ba\|_0\leq 2$ for each $(\ba,b)\in\mathcal{A}$ and $|\mathcal{A}|$ is polynomially bounded by $n$.
\end{prop}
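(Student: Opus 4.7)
The plan is to reduce from MAX-INDEPENDENT-SET. Given a graph $G=(V,E)$ with $V=[n]$, I will construct an instance of \eqref{setOfInterest} together with a linear objective whose optimal value encodes the independence number $\alpha(G)$, thereby showing that any exact algorithm for optimizing over \eqref{setOfInterest} solves MAX-INDEPENDENT-SET.

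The construction is as follows. Let $\mathcal{A} := \{(\bzero,0)\} \cup \{(e_i+e_j,-1) : (i,j)\in E\}$, where $e_i$ denotes the $i$-th standard basis vector in $\bR^n$. Every $\ba$ appearing in $\mathcal{A}$ satisfies $\|\ba\|_0\leq 2$, and $|\mathcal{A}|\leq 1+\binom{n}{2}$ is polynomial in $n$. The corresponding set is
\[
S = \bigl\{(\theta,\bx)\in \bR\times\{0,1\}^n : \theta\geq 0,~\theta\geq x_i+x_j-1,~\forall (i,j)\in E\bigr\},
\]
and the linear objective I would minimize over $S$ is $(n+1)\theta - \sum_{i=1}^n x_i$.

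The analysis is a case split on whether $\bx$ is the characteristic vector of an independent set in $G$. In the independent case, every edge constraint is slack at $\theta=0$, so the optimal $\theta$ equals $0$ and the objective reduces to $-\sum_i x_i$, which attains its minimum $-\alpha(G)$ at a maximum independent set. Otherwise some edge $(i,j)\in E$ has $x_i=x_j=1$, which forces $\theta\geq 1$ and hence the objective value to be at least $(n+1)-n = 1$; since $-\alpha(G)\leq 0 < 1$, no such point can be optimal. Thus the minimum over $S$ equals $-\alpha(G)$, so an exact optimization oracle for \eqref{setOfInterest} recovers the independence number, establishing NP-hardness. The only delicate point is calibrating the coefficient $n+1$ on $\theta$ so that even a single edge violation strictly outweighs any possible gain from the $-\sum_i x_i$ term (a standard big-$M$ argument), and including the trivial constraint $(\bzero,0)$ to bound $\theta$ below so that the objective does not diverge to $-\infty$ when $\bx=\bzero$.
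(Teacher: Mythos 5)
Your proof is correct and is essentially the same construction as the paper's: the paper reduces from vertex cover using constraints $\theta\geq -x_u-x_v$ for each edge plus $\theta\geq -1$, with objective $\sum_v x_v+(k'+1)\theta$, which is the complement-variable mirror image of your independent-set reduction with $\theta\geq x_i+x_j-1$, $\theta\geq 0$, and objective $(n+1)\theta-\sum_i x_i$. The only cosmetic difference is that you recover $\alpha(G)$ exactly via an optimization oracle while the paper answers the vertex-cover decision question by testing whether the optimal value is at most $-1$; both are valid big-$M$ arguments with $2$-sparse $\ba$ and $|\mathcal{A}|=O(n^2)$.
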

\begin{proof}
	We prove by polynomially reducing an arbitrary instance of the $NP$-complete vertex cover problem to a linear
	optimization problem over \eqref{setOfInterest} with $\|\ba\|_0\leq 2$ and $|\mathcal{A}|=O(n^2)$. The vertex cover problem is stated as:\begin{itemize}
		\item Given an undirected graph $G=(V,E)$ and positive integer $k'$, does there exist $V'\subseteq V$ with $|V'|\leq k'$ such that $u\in V'$ or $v\in V'$ for each $uv\in E$?
	\end{itemize}
	We next show that such vertex cover $V'$ exists if and only if the optimal objective value of the following problem is at most $-1$:\begin{equation}\label{opt:VC}
		\min\Big\{\sum_{v\in V}x_{v}+(k+1)\theta: \theta\geq -1;~\theta\geq -x_{u}-x_{v},uv\in E;~\bx\in\{0,1\}^V \Big\}.
	\end{equation}
	Note that \eqref{opt:VC}$\leq-1$ if and only if the optimal solution $(\theta^*,\bx^*)$ satisfies $\theta^*=-1$,
	$\sum_{v\in V}x^*_{v}\leq k$ and $-1\geq-x^*_{u}-x^*_{v}$ for each $uv\in E$. Such $\bx^*$ corresponds to a vertex cover $V':=\{v\in V:x^*_v=1\}$ of $G$ with $|V'|\leq k'$. On the other hand, a vertex cover $V'$ of $G$ with $|V'|\leq k'$ corresponds to an optimal solution $(-1,x^*)$ of \eqref{opt:VC} with objective value at most $-1$ satisfying $x^*_v=1$ if and only if $v\in V'$.\qed
\end{proof}
\begin{algorithm}[tb!]
	\SetAlgoLined
	\textbf{Input: }$\hat{\bx},~I=\{1,\ldots,d\}, b, a_i, i\in I$ with $0\leq a_1\leq \ldots\leq a_d$\\
	\textbf{Output: }$\gxq{\tilde{Q}}(I)$\\
	Initialize $\hat{x}_{\max}\leftarrow-\infty$, $i_{\max}\leftarrow d$,
	$(\sigma_i)_{i=1}^d\leftarrow(\emptyset)_{i=1}^d$;\\
	\For{$i=d,d-1,\ldots,1$}{
		\If{$\hat{x}_i>\hat{\bx}_{\max}$}{
			$\sigma_i\leftarrow i_{\max}$,
			$\hat{x}_{\max}\leftarrow\hat{x}_i$, $i_{\max}\leftarrow i$;}
	}
	$\tilde{g}\leftarrow b +a_{i_{\max}}\hat{x}_{i_{\max}}$, $k\leftarrow i_{\max}$;\\
	\While{$k\neq d$}{
		$\tilde{g}\leftarrow\tilde{g}+(a_{\sigma_k}-a_{k})\hat{x}_{\sigma_k}$;\\
		$k\leftarrow \sigma_k$;
	}
	Return $\gxq{\tilde{Q}}(I) = \tilde{g}$;
	\caption{Evaluating the violation underestimate $\gxq{\tilde{Q}}(I)$.}
	\label{alg:separation}
\end{algorithm}

\subsection{A cutting-plane approximation rule}\label{subsec:cutpl_rule}

We next describe an alternative selection rule for $I$ that is based on a single affine lower bound (e.g., from a
cutting-plane) of $Q'$. Let $\ba \in \mathbb{R}^n$, $b \in \mathbb{R}$, and \begin{displaymath}
	F_{(\ba,b)}=\{(\theta,\bx)\in\bR\times\{0,1\}^n:\theta\geq \ba^T\bx+b \}.
\end{displaymath}
Let $(\hat{\theta},\hat{\bx})\in\bR\times[0,1]^n$ be given, and consider the problem of finding a valid inequality for $F_{(a,b)}$ of the form
\begin{equation}
	\label{eq:genfab}
	\theta\geq\sum_{i\in I}\mu_ix_i+\eta
\end{equation}
that is maximally violated by $(\hat{\theta},\hat{\bx})$.

\begin{prop}\label{prop:singleCut}
	The problem of maximizing
	\[ \sum_{i \in I} \mu_i \hat{x}_i + \eta - \hat{\theta} \] 
	such that inequality \eqref{eq:genfab} defined by $\mu,\eta$ is valid for $F_{(\ba,b)}$ has optimal value $-\sum_{i=1}^na_i^-+\sum_{i\in I}(a_i\hat{x}_i+a_i^-)+b-\hat{\theta}$,	where $a_i^-=\max\{-a_i,0\}$.
\end{prop}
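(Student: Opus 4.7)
The plan is to apply Proposition \ref{prop:sparse} to the special set $E = F_{(\ba,b)}$, where the ``function'' $Q$ being epigraphed is simply the affine function $Q(\bx) = \ba^T\bx + b$ over $R(X) = [0,1]^n$. This reduces the validity condition for inequalities of the form \eqref{eq:genfab} to the constraints
\begin{equation*}
\sum_{i \in I} \mu_i \chi_i + \eta \leq \nu_I^Q(\bchi), \quad \forall \bchi \in \{0,1\}^I.
\end{equation*}
The first concrete step is to evaluate $\nu_I^Q(\bchi)$ in closed form. Because $Q$ is separable and linear and the box relaxation decouples across coordinates, the minimization over the free coordinates $i \notin I$ simply picks $x_i \in \{0,1\}$ to minimize $a_i x_i$, contributing $-a_i^-$. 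Hence
\begin{equation*}
\nu_I^Q(\bchi) = \sum_{i \in I} a_i \chi_i + b - \sum_{i \notin I} a_i^-.
\end{equation*}

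Substituting this into \eqref{CGLP} gives the separation LP, and the second step is to solve it. I will first exhibit a feasible solution achieving the claimed value: take $\mu_i = a_i$ for $i \in I$ and $\eta = b - \sum_{i \notin I} a_i^-$. Every constraint then collapses to $0 \leq 0$, and the objective evaluates to $\sum_{i \in I} a_i \hat{x}_i + b - \sum_{i \notin I} a_i^- - \hat{\theta}$, which after adding and subtracting $\sum_{i \in I} a_i^-$ coincides with $-\sum_{i=1}^n a_i^- + \sum_{i \in I}(a_i \hat{x}_i + a_i^-) + b - \hat{\theta}$.

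The third step is the upper bound. A clean way is the change of variables $\tilde{\mu}_i = \mu_i - a_i$ and $\tilde{\eta} = \eta - b + \sum_{i \notin I} a_i^-$, which transforms the LP into maximizing $\sum_{i \in I} \tilde{\mu}_i \hat{x}_i + \tilde{\eta}$ subject to $\sum_{i \in I} \tilde{\mu}_i \chi_i + \tilde{\eta} \leq 0$ for every $\bchi \in \{0,1\}^I$ (plus the translating constant matching the candidate value above). Since $\hat{\bx}_I \in [0,1]^I = \operatorname{conv}(\{0,1\}^I)$, I can write $\hat{\bx}_I = \sum_\bchi \lambda_\bchi \bchi$ for nonnegative weights summing to one; taking the corresponding convex combination of the constraints yields $\sum_{i \in I} \tilde{\mu}_i \hat{x}_i + \tilde{\eta} \leq 0$, which caps the objective at zero. (Equivalently, this is just the LP dual: the dual requires expressing $(\hat{\bx}_I,1)$ as a convex combination of $(\bchi, 1)$, and any such combination produces the same right-hand-side value, giving an immediate duality proof.)

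The only mild obstacle is keeping the two affine shifts (from $\mu$ to $\tilde\mu$ and from $\eta$ to $\tilde\eta$) consistent so that the constant emerging from the substitution matches the target expression; once done, feasibility of $\tilde\mu = 0$, $\tilde\eta = 0$ together with the convex-combination upper bound closes the proof.
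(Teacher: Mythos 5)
Your proof is correct and takes essentially the same route as the paper: both first compute $\min\{\theta:(\theta,\bx)\in F_{(\ba,b)},\bx_I=\bchi\}=\sum_{i\in I}a_i\chi_i-\sum_{i\notin I}a_i^-+b$ and then solve the resulting cut-generating LP. The paper writes out the LP dual and observes its objective is constant on the feasible region, while you exhibit the explicit primal optimum $\mu_I=\ba_I$, $\eta=b-\sum_{i\notin I}a_i^-$ and certify optimality by taking a convex combination of the constraints weighted by a representation of $\hat{\bx}_I$ — the same duality argument in a slightly more explicit form.
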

\begin{proof}
	Inequality $\theta\geq\sum_{i\in I}\mu_ix_i+\eta$ is valid for $F_{(\ba,b)}$ if and only if
	\begin{align*}
		\sum_{i\in I}\mu_i\chi_i+\eta &\leq \min\{\theta:(\theta,\bx)\in F_{(\ba,b)},\bx_I =\bchi \} \\
		&=\sum_{i\in I}a_i\chi_i-\sum_{i\notin I}a_i^-+b,~\forall \bchi\in\{0,1\}^I.
	\end{align*}
	Therefore, by LP duality, the maximum violation of an inequality of this form is
	\begin{align*}
		\max_{\bmu,\eta}&\Big\{\sum_{i\in I}\mu_i\hat{x}_i+\eta-\hat{\theta}:\sum_{i\in I}\mu_i\chi_i+\eta\leq\sum_{i\in
			I}a_i\chi_i-\sum_{i\notin I}a_i^-+b,~\bchi\in\{0,1\}^I \Big\}\\
		&=\min\Big\{\sum_{\chi\in\{0,1\}^I}\sum_{i\in I}a_i\chi_i\lambda_\chi+\sum_{\chi\in\{0,1\}^I}\Big(-\sum_{i\notin
			I}a_i^-+b\Big)\lambda_\chi: \\ 
		& \qquad\qquad \qquad\sum_{\chi\in\{0,1\}^I}\chi_i\lambda_\chi=\hat{x}_i,i\in I;\sum_{\chi\in\{0,1\}^I}\lambda_\chi=1,\lambda\geq 0 \Big\}-\hat{\theta}\\
		&=\sum_{i\in I}a_i\hat{x}_i-\sum_{i\notin I}a_i^-+b-\hat{\theta} \\
		&=-\sum_{i=1}^na_i^-+\sum_{i\in I}(a_i\hat{x}_i+a_i^-)+b-\hat{\theta}.\quad\qed
	\end{align*}
\end{proof}

Using Proposition \ref{prop:singleCut}, we interpret the value $a_i\hat{x}_i+a_i^-$ as a measure of the importance of variable
$x_i$ for the cutting plane $\theta\geq \ba^T \bx+b$ at $\hat{\bx}$. We use this intuition to construct a selection rule. 
We first pick a cutting plane $\theta\geq \ba^T \bx+b$ that approximates the epigraph of $Q$ at $\hat{\bx}$. Then indices $i\in[n]$ are
added to the set $I$ in decreasing order of the value $a_i\hat{x}_i+a_i^-$ until $|I|=K$.  
Note that $a_i\hat{x}_i+a_i^-\geq 0$ for any $a_i\in\bR$ and $\hat{x}_i\in[0,1]$. 
If the cutting plane approximation $\theta\geq \ba^T \bx+b$ is sparse (i.e., $|\{ i \in [n]: a_i \neq 0\}|$ is small), it is possible that $|\{i\in[n]:a_i\hat{x}_i+a_i^->0\}|<K$. In such cases, we first add those indices with positive $a_i\hat{x}_i+a_i^-$ values into $I$, then pick another cutting plane and repeat the procedure until $|I|=K$. 
A potential advantage of this selection rule is that it does not require any evaluation of the cut violation function.
And unlike the selection rule in \S \ref{subsec:greedy}, this selection rule can take advantage
of the availability of dense cutting plane approximations. The potential limitation, of course, is the reliance on the
single cutting-plane approximation.

The final detail we need to specify for this approach is how to choose the cutting-plane approximation(s). Assume a
collection $\mathcal{A}$ of cutting planes of the form $\theta \geq \ba^T \bx + b$ is available. A natural choice for $\mathcal{A}$ is the set of 
cutting planes (e.g., Benders cuts) that have been added in the algorithm so far for approximating $E$. A natural ordering for choosing which cutting plane
in $\mathcal{A}$ to use first is based on the tightness of the cutting plane at the point $\hat{\bx}$. The inequality in
$\mathcal{A}$ with
coefficients $(\ba,b)$ that yield the highest $\ba^T\hat{\bx}+b$ value is chosen first, etc. 

\section{Computational results}\label{sec:computational}
To provide insight into the computational potential of $I$-sparse cuts, we conduct numerical experiments on three MILP problems with block diagonal structures \eqref{block_diag_MILP}:\begin{itemize}
	\item The stochastic network interdiction (SNIP) problem \cite{pan2008minimizing}: $n=320$ for these instances.
	\item The latent-class logit assortment (LLA) problem \cite{mendez2014branch}: $n=500$ for these instances.
	\item A stochastic version of the capacitated facility location (CAP) problem \cite{bodur2017strengthened}: $n$
	ranges between 25 and 50 for these instances.
\end{itemize} 
We present the problem definition and details of the test instances for each problem in the Appendix.
For the first two test problems, each block of their MILP formulations is sparse in variables $\bx$, but in distinct ways.
For the SNIP problem, we observe that when applying Benders decomposition to solve its LP relaxation the Benders cuts
are mostly very sparse in $\bx$. In the LLA problem each block of the MILP formulation only uses a small portion (between
12 and 20) of the $\bx$ variables, making the use of sparse cuts very natural for this problem. Neither of these two sparsity properties 
holds for the CAP problem.

The constraints $\bx\in X$ in all our test problems 
consist of $\bx$ being binary and either a lower-bounding or upper-bounding cardinality constraint on the number of
nonzero $x_i$ variables. Therefore, we use $R(X)=\conv(X)$ for all our tests instances. We use the direct LP
relaxation as $Q_k$ for each block of the MILP as described in \S \ref{sec:block_diag}. 

We test the ability of $I$-sparse cuts to improve upon the standard LP relaxation within the Benders
reformulation \eqref{Benders_reformulation}. 
The cut generating process is described in Algorithm \ref{alg:cut_generation}. 
In the first step (line \ref{step:Benders}), we add standard Benders cuts iteratively until we have solved the initial
LP relaxation. Specifically, this Benders approach works with a master LP relaxation in which the constraints $\theta_k \geq Q_k(\bx), k \in
[N]$ are approximated by Benders cuts of the form:
\[ \theta_k \geq (\bpi^j)^\top (\bh^k - T^k \bx), \quad j=1,\ldots, t_k \]
where $\bpi^j$, $j=1,\ldots,t_k$, are extreme point solutions to the dual feasible region for subproblem
$k$, $\Pi^k = \{ \bpi : \bpi^\top W^k \leq \bd^k\}$. In the standard cutting-plane implementation \cite{kelley1960cutting},
after solving a master LP relaxation and obtaining a solution $(\hat{\btheta},\hat{\bx})$, Benders cuts are identified by solving the
subproblem \eqref{eq:subprob} with $\bx=\hat{\bx}$ and adding the Benders cut defined by the dual optimal solution 
if it is violated by $(\hat{\btheta},\hat{\bx})$. 
We use this standard cutting-plane method for the SNIP and LLA instances. We
found the cutting-plane method took too long to converge for CAP instances, so we use the level method
\cite{lemarechal1995new} for line \ref{step:Benders} of Algorithm \ref{alg:cut_generation} on those instances.

In terms of the $I$-sparse cut generation, we consider the following variants of Algorithm \ref{alg:cut_generation}:
\begin{itemize}
	\item Greedy-$K$: Use the greedy rule described in \S \ref{subsec:greedy} for generating the support $I$ of size $K$;
	\item Cutpl-$K$: Use the cutting plane approximation rule described in \S \ref{subsec:cutpl_rule} for generating the support $I$ of size $K$;
\end{itemize}
We test Greedy-$K$ and Cutpl-$K$ with $K$ fixed at $4$, $7$, and $10$. We also test adaptive variants,
Greedy-Ad and Cutpl-Ad of each selection method. These variants begin with $K=4$. After solving the master LP,
if $K < 10$ and the gap closed in the last five iterations  for this $K$ is less than 1\% of the total gap closed thus
far we increase $K$ by 1 and re-start the generation of $I$-sparse cuts.


For Cutpl, we use the collection of all the Benders cuts added for block $k$ in line \ref{step:Benders} of Algorithm \ref{alg:cut_generation} as $\mathcal{A}$ for $Q_k$.
To improve the efficiency of the algorithm, when applying Greedy, we only select $I$ from indices for which the
corresponding variables have a nonzero coefficient in at least one of the Benders cuts for block $k$. This restriction
is also implicitly implemented when using Cutpl since indices $i$ with $a_i=0$ for all $(\ba,b)\in\mathcal{A}$ can never be selected by Cutpl. It significantly improves the efficiency of Greedy on SNIP instances (by skipping the generation of $\{i\}$-sparse cuts for most $i\in[n]$). 

All LPs and MILPs are solved using Gurobi 9.1.0. 
\begin{algorithm}[tb!]
	\caption{Generating $I$-sparse cuts}
	\label{alg:cut_generation}
	\SetAlgoLined
	Initialize a master LP using Benders decomposition;\label{step:Benders}\\
	\Repeat{No violated cut can be generated or time limit is reached}{
		Solve the master LP to obtain solution $(\hat{\theta},\hat{\bx})$;\\
		\For{$k\in[N]$}{
			Choose a support $I$;\label{step:Choose_I}\\
			Generate an $I$-sparse cut valid for the set $E_k = \{ (\theta_k,\bx) \in \mathbb{R}\times X : \theta_k \geq Q_k(\bx) \}$ by solving \eqref{CGLP};\label{step:Generate_cut}\\
			Add the $I$-sparse cut to the master LP if it is violated by $(\hat{\theta}_k,\hat{\bx})$;\label{step:Add_cut}}}
\end{algorithm}

\begin{figure}[hbt!]
	\centering
	\includegraphics[width=0.8\linewidth]{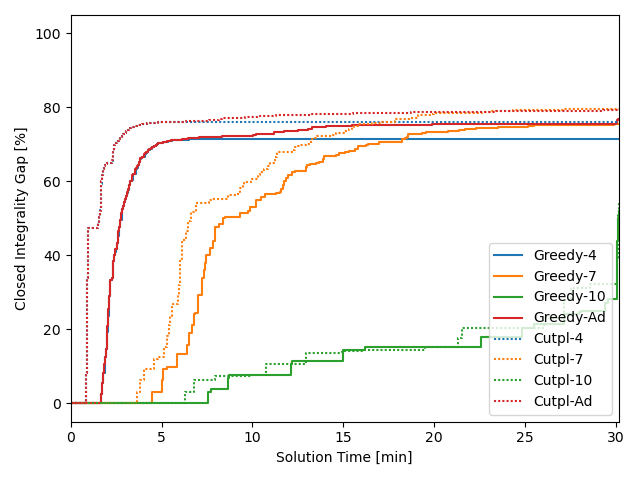}
	\caption{Integrality-gap-closed profiles for SNIP instances obtained by different Greedy rules (solid) and Cutpl rules (dashed)}
	\label{fig:SNIPK}
\end{figure}

\begin{figure}[hbt!]
	\centering
	\includegraphics[width=0.8\linewidth]{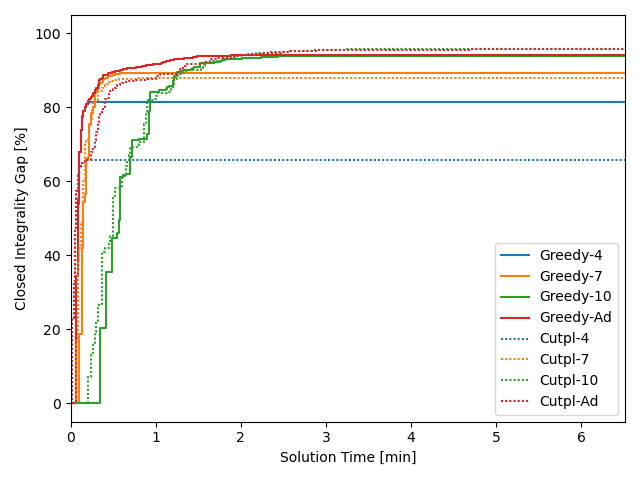}
	\caption{Integrality-gap-closed profiles for LLA instances obtained by different Greedy rules (solid) and Cutpl rules (dashed)}
	\label{fig:LLA_K}
\end{figure}

\begin{figure}[hbt!]
	\centering
	\includegraphics[width=0.8\linewidth]{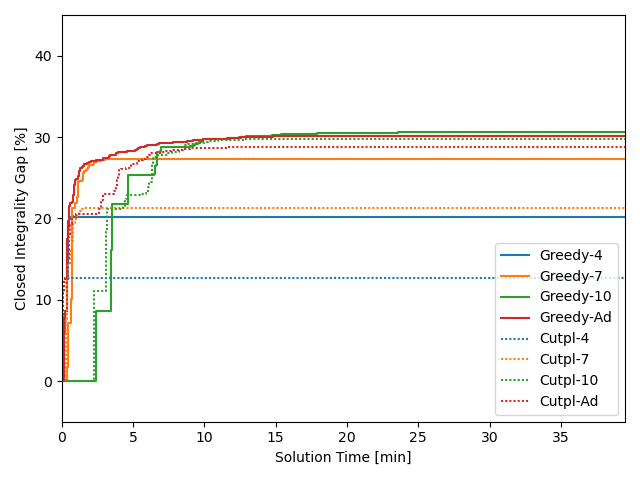}
	\caption{Integrality-gap-closed profiles for CAP instances obtained by different Greedy rules (solid) and Cutpl rules (dashed)}
	\label{fig:CAP_K}
\end{figure}

\subsection{LP relaxation results}

We first present results showing the impact of adding $I$-sparse inequalities to the LP relaxation of the problem without branching.
An 1800-second time limit is set for generating $I$-sparse cuts in these experiments. 
To visually compare the performance of $I$-sparse cuts across multiple test instances, we
present results in the form of an integrality-gap-closed profile. Each curve in such a profile corresponds to a
particular cut generation strategy, and its value at time $t$
represents the average (over the set of instances for that problem class) integrality gap closed by time $t$, where the
integrality gap closed at time $t$ is calculated as $(z_{R}(t) - z_{LP})/(z^* - z_{LP}) \times 100\%$, where $z_R(t)$ is the bound
obtained by the algorithm at time $t$, $z_{LP}$ is the basic LP relaxation bound, and $z^*$ is the optimal value.

The results for the SNIP, LLA, and CAP test problems are given in Figures \ref{fig:SNIPK}, \ref{fig:LLA_K}, and
\ref{fig:CAP_K}, respectively, where in each case we vary $K \in \{4,7,10\}$ or use an adaptively chosen $K$, and compare the Greedy and Cutpl
selection rules. In each case we find that the two different selection rules have similar trends in gap closed over
time. For fixed $K$, Cutpl rules perform better on the SNIP test instances, whereas Greedy rules have significantly better
performance
on the LLA and CAP instances when $K=4$ or $K=7$.
In terms of the effect of $K$, as expected smaller values of $K$ yield quicker initial gap improvement, whereas larger
values of $K$ require more time to close the gap but eventually lead to more gap closed. For the SNIP instances we find that using $K=4$ already closes most of
the gap, and does so much more quickly than with $K=7$ or $K=10$. For the LLA instances we find that increasing $K$
leads to more gap closed, although significant gap is already closed with $K=4$, and the additional gap closed using
$K=10$ is marginal, while requiring significantly more time.  
For the CAP instances, we find that the $I$-sparse cuts close significantly less gap than the other test problems,
although the gap closed is still significant. Large values of $K$ yield significantly more gap closed on the CAP
instances, but also requires considerably longer running time. The adaptive approaches Greedy-Ad and Cutpl-Ad
appear to successfully achieve the best of the different choices of $K$, e.g., yielding quick improvement in bound early
on while also eventually achieving bound improvement as good as achieved with the largest $K$. 

We observe that the number of $I$-sparse cuts added by the algorithm does not increase when $K$ increases. Thus, the improvement in the bound is attributable to stronger cuts rather than an increase in the number of cuts added.




\if0
\begin{table}[htb!]
	\centering
	\begin{tabular}{cccccccccc}
		\toprule
		$b$ & \multicolumn{3}{c}{Avg cut gen time (s)} & \multicolumn{3}{c}{Avg \# iterations} & \multicolumn{3}{c}{Avg closed int gap (\%)}\\
		\cmidrule(lr){2-4}\cmidrule(lr){5-7}\cmidrule(lr){8-10}
		& $K=4$ & $K=7$ & $K=10$ & $K=4$ & $K=7$ & $K=10$ & $K=4$ & $K=7$ & $K=10$\\
		\midrule\midrule\addlinespace
		$30$ & 210 & \phantom{$\geq$0}497 & \phantom{$\geq$0}880 & \phantom{0}8.0 & \phantom{$\geq$}6.0 & \phantom{$\geq$}5.8 & 73.0 & 76.9 & 78.6\\
		$50$ & 317 & \phantom{$\geq$}1134 & $\geq$1800 & \phantom{0}9.8 & \phantom{$\geq$}9.0 & $\geq$1.0 & 71.6 & 75.5 & 57.2\\
		$70$ & 388 & $\geq$1523 & $\geq$1800 & 12.4 & $\geq$8.0 & $\geq$1.0 & 73.5 & 78.1 & 40.9\\
		$90$ & 459 & $\geq$1715 & $\geq$1800 & 13.6 & $\geq$8.0 & $\geq$1.0 & 67.4 & 71.2 & 31.0\\
		\bottomrule
		\multicolumn{10}{l}{At each row, the averages of five instances are reported.}
	\end{tabular}
	\caption{Cut generation time, number of iterations and integrality gap closed by Greedy-$K$ for SNIP instances}
	\label{table:SNIP}
\end{table}
\fi


\if0
\begin{table}[htb!]
	\centering
	\begin{tabular}{cccccccccc}
		\toprule
		$p$ & \multicolumn{3}{c}{Avg cut gen time (s)} & \multicolumn{3}{c}{Avg \# iterations} & \multicolumn{3}{c}{Avg closed int gap (\%)}\\
		\cmidrule(lr){2-4}\cmidrule(lr){5-7}\cmidrule(lr){8-10}
		& $K=4$ & $K=7$ & $K=10$ & $K=4$ & $K=7$ & $K=10$ & $K=4$ & $K=7$ & $K=10$\\
		\midrule\midrule\addlinespace
		$12$ & 11 & 18 & 56 & 15.5 & 12.5 & 13.2 & 90.5 & 96.0 & 99.1\\
		$16$ & 15 & 34 & 131 & 20.0 & 19.0 & 18.5 & 78.0 & 88.1 & 92.9\\
		$20$ & 17 & 46 & 199 & 20.5 & 20.5 & 20.5 & 76.0 & 84.2 & 90.0\\
		\bottomrule
		\multicolumn{10}{l}{At each row, the averages of six instances are reported.}
	\end{tabular}
	\caption{Cut generation time, number of iterations and closed integrality gap obtained by Greedy-$K$ for LLA instances}
	\label{table:LLA}
\end{table}
\fi


\if0
\begin{table}[htb!]
	\centering
	\small
	\begin{tabular}{cccccccccc}
		\toprule
		Instance \# & \multicolumn{3}{c}{Avg cut gen time (s)} & \multicolumn{3}{c}{Avg \# iterations} & \multicolumn{3}{c}{Avg closed int gap (\%)}\\
		\cmidrule(lr){2-4}\cmidrule(lr){5-7}\cmidrule(lr){8-10}
		& $K=4$ & $K=7$ & $K=10$ & $K=4$ & $K=7$ & $K=10$ & $K=4$ & $K=7$ & $K=10$\\
		\midrule\midrule\addlinespace
		101-104 & 17 & \phantom{0}84 & \phantom{$\geq$0}654 & 12.8 & \phantom{0}8.2 & \phantom{$\geq$0}9.2 & 34.4 & 44.6 & 52.2\\
		111-114 & 30 & 204 & $\geq$1552 & \phantom{0}6.8 & 12.5 & $\geq$14.2 & \phantom{0}7.1 & 13.3 & 15.7\\
		121-124 & 32 & 147 & \phantom{$\geq$}1000 & \phantom{0}7.0 & \phantom{0}8.8 & \phantom{$\geq$0}8.8 & 14.7 & 20.4 & 22.6\\
		131-134 & 35 & 204 & \phantom{$\geq$0}906 & \phantom{0}7.5 & 11.8 & \phantom{$\geq$0}6.8 & 24.6 & 31.1 & 32.1\\
		\bottomrule
		\multicolumn{10}{l}{At each row, the averages of four instances are reported.}
	\end{tabular}
	\caption{Cut generation time, number of iterations and closed integrality gap obtained by Greedy-$K$ for CAP instances}
	\label{table:CAP}
\end{table}
\fi

\begin{figure}[hbt!]
	\centering
	\begin{subfigure}[b]{0.49\linewidth}
		\includegraphics[width=\linewidth]{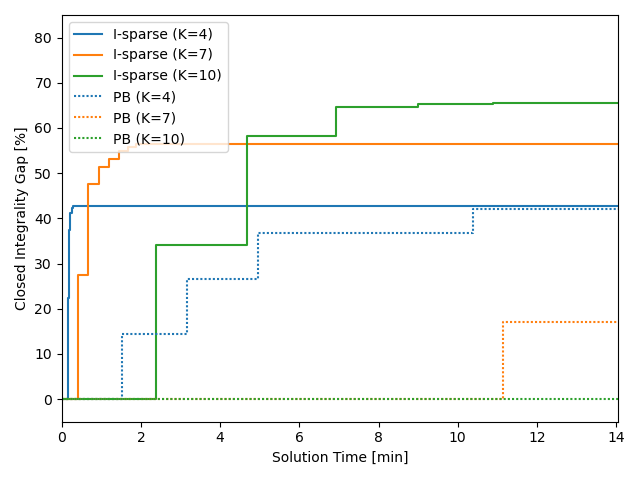}
	\end{subfigure}
	\begin{subfigure}[b]{0.49\linewidth}
		\includegraphics[width=\linewidth]{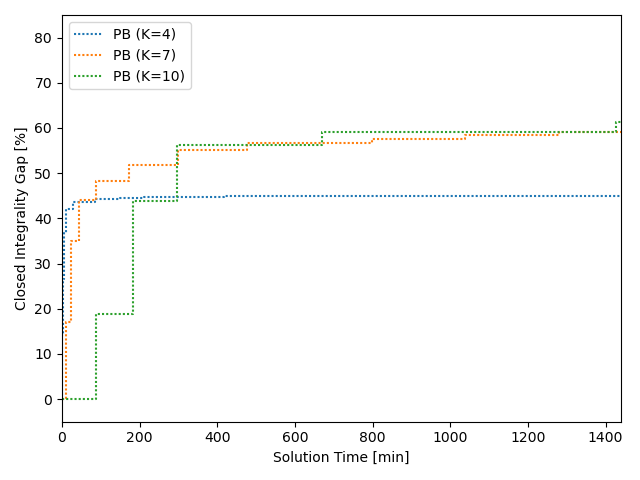}
	\end{subfigure}
	\begin{subfigure}[b]{0.49\linewidth}
		\includegraphics[width=\linewidth]{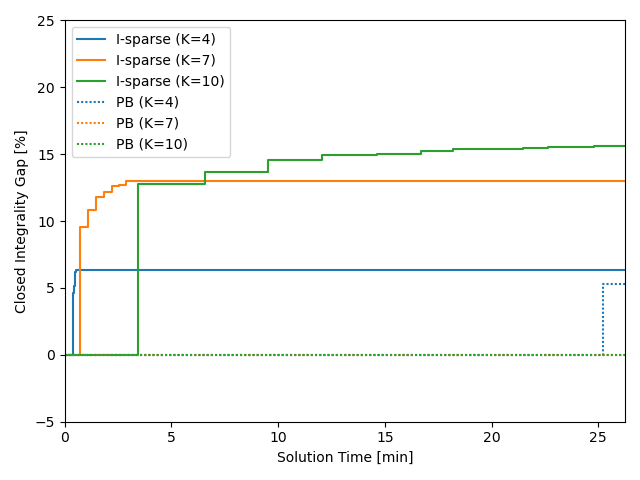}
	\end{subfigure}
	\begin{subfigure}[b]{0.49\linewidth}
		\includegraphics[width=\linewidth]{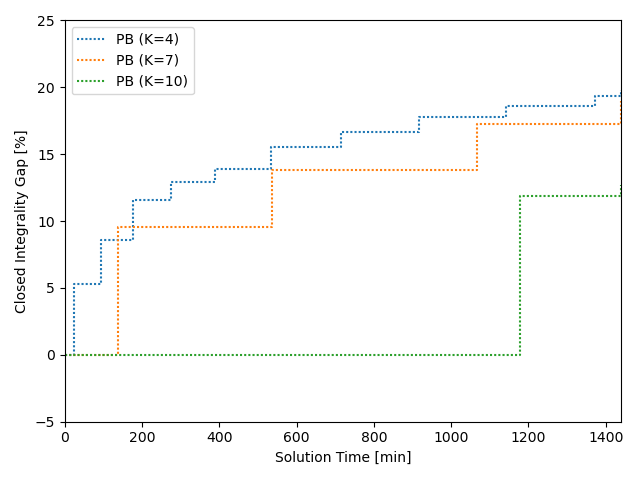}
	\end{subfigure}
	\caption{Integrality gap closed by $I$-sparse cuts and cuts generated by the PB algorithm on instances CAP101 (top) and CAP111 (bottom)}
	\label{fig:IsparseVSPB}
\end{figure}

We next compare the $I$-sparse cuts with the multi-term 0-1 disjunctive cuts without the sparsity
restriction, but generated from the same sets $I$, where the cuts are generated using the Perregaard and Balas (PB)
\cite{perregaard2001generating} approach. Our interest in this comparison is to demonstrate the potential time
reductions from using the $I$-sparse cuts and to estimate the extent to which the sparsity restriction degrades the
quality of the relaxation. We conduct this experiment only on the CAP test instances, since we have already seen that
the $I$-sparse cuts are sufficient to close most of the gap in the SNIP and LLA instances, and thus there is little
potential to close more gap when eliminating the sparsity restriction. 
We
set a 24-hour time limit for the PB algorithm. For both the $I$-sparse and PB cuts, we use Greedy-$K$ as the rule for
selecting the  set $I$ to define the multi-term disjunction. 

Figure \ref{fig:IsparseVSPB} displays the integrality gap closed over time for two specific CAP instances, one for which $I$-sparse cuts were
able to close a significant portion of the gap (CAP101), and one for which they were not (CAP111). The figures on the
left display results for both the $I$-sparse cuts (solid lines) and PB cuts (dashed lines), with the time-scale
($x$-axis) determined by the time required to generate all $I$-sparse cuts for the largest value of $K$. From these
figures we observe that for any value of $K$, within this time frame the $I$-sparse cuts close significantly more gap
than the PB cuts. To estimate the potential for PB cuts to eventually close more gap, we show the  gap closed by the PB
cuts over the full 24-hour time limit in the figures on the right. For CAP101 we find that the PB cuts do not close more
gap than the $I$-sparse cuts, suggesting that the sparsity restriction is not significantly degrading the strength of
the cuts in this case. On the other hand, for CAP111, we find that when given enough time the PB cuts can close
significantly more gap, as seen particularly for the $K=4$ results, although requiring far more time to do so. For both
CAP instances, we observe that most of the generated PB cuts are as sparse as the $I$-sparse cuts in the first few
iterations but become significantly denser (e.g., with non-zeros on more than half the variables) in later iterations.

\subsection{Solving to optimality}
We next present empirical results using $I$-sparse cuts within a branch-and-cut algorithm for exactly solving the test
instances. 
The purpose of this study is to verify that the demonstrated relaxation improvement from these cuts 
translates to a reduction in the size of the search tree for these instances. We emphasize that our purpose is not to attempt to
use these cuts to obtain state-of-the-art results, as such a test would require significant care in integrating multiple
different types of cuts, etc.

We investigate using $I$-sparse cuts added at the root node to obtain an improved LP relaxation of
the MILP \eqref{Benders_reformulation}, leading to a method we refer to as IBC ($I$-sparse branch-and-cut).
We then solve the MILP instance, strengthened with the $I$-sparse cuts, via a Benders branch-and-cut algorithm.
In this method,
{\it Benders} cuts are added as lazy cuts at nodes in the branch-and-bound tree, as needed when integer feasible
solutions are encountered.
Specifically, when a solution $(\hat{\theta},\hat{\bx})$ with $\hat{\bx}$ integer valued is obtained in the search process
(either via a heuristic or as a solution of a node relaxation subproblem) we check whether it is feasible to
\eqref{Benders_reformulation}, i.e., whether $\hat{\theta}_k\geq Q_k(\hat{\bx})$ for all $k\in[N]$. If not we add Benders
cuts as lazy constraints to cut off this infeasible solution and continue branch-and-cut. 
A more detailed description of Benders branch-and-cut can be found, e.g., in
\cite{bodur2017strengthened,chen2021generating}.
We emphasize that we add $I$-sparse cuts only at the root node, so in terms of evaluation of the use of $I$-sparse cuts, this is a cut-and-branch
approach. For generating $I$-sparse cuts, we use the Cutpl-Ad method
from the previous section on the SNIP instances,
and Greedy-Ad on the LLA and CAP instances.  To achieve a balance between the benefit from the gap closed from the
$I$-sparse cuts and the cut generation time, we terminate the cut generation process 
if the gap closed in the first two iterations of adding $I$-sparse cuts for a fixed $K$ is smaller than 1\% of the gap
closed since the beginning of the $I$-sparse cut generation process. This choice is based on the empriical observation that 
the largest gap improvement almost always occurs in the
first two iterations for each fixed $K$.

We compare against two other exact solution approaches, EXT --- solving the MILP \eqref{block_diag_MILP} directly in
extensive form, and BBC --- vanilla Benders branch-and-cut, which is identical to the IBC implementation
except that the $I$-sparse cut generation step is skipped. A 3600-second time limit is set for solving each 
instance (including cut generation). Because the SNIP instances can be easily solved by
BBC when solver cuts are used on top of the Benders cuts \cite{bodur2017strengthened,chen2021generating}, when solving
the SNIP instances using BBC 
or IBC, we turn off Gurobi presolve and cuts to show the impact of the $I$-sparse cuts. No changes to Gurobi's settings are made
for the LLA or CAP instances, or for SNIP instances when solving with EXT.

\begin{table}[tb!]
	\centering
	\caption{Exact solution results for SNIP instances. Time and gap results are averages over five instances.}
	\begin{tabular}{cccccccccc}
		\toprule
		$b$ & \multicolumn{3}{c}{Avg soln time (s)} & \multicolumn{3}{c}{\# solved instances} & \multicolumn{3}{c}{Avg opt gap (\%)}\\
		\cmidrule(lr){2-4}\cmidrule(lr){5-7}\cmidrule(lr){8-10}
		& EXT & BBC & IBC & EXT & BBC & IBC & EXT & BBC & IBC \\
		\midrule\midrule\addlinespace
		30 & $\geq$3600 & \phantom{$\geq$0}184 & 363 & 0/5 & 5/5 & 5/5 & 12.6 & 0.0 & 0.0 \\
		50 & $\geq$3600 & $\geq$1675 & 505 & 0/5 & 4/5 & 5/5 & 19.0 & 0.5 & 0.0 \\
		70 & $\geq$3600 & $\geq$3534 & 502 & 0/5 & 1/5 & 5/5 & 21.2 & 2.7 & 0.0 \\
		90 & $\geq$3600 & $\geq$3600 & 802 & 0/5 & 0/5 & 5/5 & 20.8 & 5.8 & 0.0 \\
		\bottomrule
	\end{tabular}
	\label{table:BnC_SNIP}
\end{table}

\begin{table}[tb!]
	\centering
	\caption{Exact solution results for LLA instances. Time and gap results are averages over six instances.}
	\begin{tabular}{cccccccccc}
		\toprule
		$p$ & \multicolumn{3}{c}{Avg soln time (s)} & \multicolumn{3}{c}{\# solved instances} & \multicolumn{3}{c}{Avg opt gap (\%)}\\
		\cmidrule(lr){2-4}\cmidrule(lr){5-7}\cmidrule(lr){8-10}
		& EXT & BBC & IBC & EXT & BBC & IBC & EXT & BBC & IBC \\
		\midrule\midrule\addlinespace
		12 & $\geq$1803 & $\geq$3100 & $\geq$715 & 4/6 & 2/6 & 5/6 & 0.2 & 0.5 & $<$0.1 \\
		16 & $\geq$3600 & $\geq$3600 & $\geq$3600 & 0/6 & 0/6 & 0/6 & 2.0 & 3.7 & \phantom{$<$}0.3 \\
		20 & $\geq$3600 & $\geq$3600 & $\geq$3600 & 0/6 & 0/6 & 0/6 & 2.1 & 4.9 & \phantom{$<$}0.4\\
		\bottomrule
	\end{tabular}
	\label{table:BnC_LLA}
\end{table}

\begin{table}[tb!]
	\centering
	\caption{Exact solution results for CAP instances. Time and gap results are averages of four instances.}
	\begin{tabular}{cccccccccc}
		\toprule
		Instance \# & \multicolumn{3}{c}{Avg soln time (s)} & \multicolumn{3}{c}{\# solved instances} & \multicolumn{3}{c}{Avg opt gap (\%)}\\
		\cmidrule(lr){2-4}\cmidrule(lr){5-7}\cmidrule(lr){8-10}
		& EXT & BBC & IBC & EXT & BBC & IBC & EXT & BBC & IBC \\
		\midrule\midrule\addlinespace
		101-104 & \phantom{$\geq$00}78 & $\geq$3600 & $\geq$3600 & 4/4 & 0/4 & 0/4 & \phantom{$<$}0.0 & 14.0 & \phantom{0}9.5\\
		111-114 & \phantom{$\geq$}1006 & $\geq$3600 & $\geq$3600 & 4/4 & 0/4 & 0/4 & \phantom{$<$}0.0 & 10.2 & \phantom{0}8.4\\
		121-124 & $\geq$1432 & $\geq$3600 & $\geq$3600 & 3/4 & 0/4 & 0/4 & $<$0.1 & 18.2 & 17.5\\
		131-134 & \phantom{$\geq$0}667 & $\geq$3600 & $\geq$3600 & 4/4 & 0/4 & 0/4 & \phantom{$<$}0.0 & 22.4 & 17.2\\
		\bottomrule
	\end{tabular}
	\label{table:BnC_CAP}
\end{table}

We summarize the average solution time, number of solved instances and average ending optimality gap obtained by
different methods on the SNIP instances, LLA instances, and CAP instances in Tables \ref{table:BnC_SNIP},
\ref{table:BnC_LLA}, and \ref{table:BnC_CAP}, respectively. For the SNIP and LLA instances, we find that IBC either solves more instances and in less
time, or yields smaller ending optimality gap than EXT and BBC, illustrating that the $I$-sparse cuts can indeed lead to
improvements when solving these instances to optimality. EXT is the most effective method for solving the CAP instances, as observed
also in \cite{bodur2017strengthened}. However, in terms of Benders branch-and-cut based methods, the ending optimality
gap of IBC is modestly smaller than BBC on these instances.

\section{Extension: $I$-sparse tilting}\label{sec:tilting}
We now describe an adaptation of the ideas used to create $I$-sparse inequalities to a more general setting by tilting a
given valid inequality. Let $D\subseteq\{0,1\}^n\times\bR^{p}$ denote a mixed-binary set and let $\bx$ and $\by$ denote the
associated binary and continuous variables in the description of $D$, respectively. Let $D^R$ be a relaxation of $D$
with $D^R\cap(\{0,1\}^n\times\bR^{p})=D$. 
We assume we are given a ``base'' inequality $\alpha^T\bx+\beta^T\by\leq \gamma$ that is valid for $D$, and propose a method
for ``tilting'' this inequality by modifying a subset of coefficients $\alpha_I$ of the binary variables, for $I
\subseteq [p]$.
Note that the $I$-sparse inequalities fit this more general setting by setting $p=1$, $y_1 = \theta$, and using 
$-\theta\leq-\text{LB}$ as the base inequality, where LB is a lower bound on $Q(\bx)$ over $\bx \in X$.

Consider a nonempty $I\subseteq [p]$ and the multi-term 0-1 disjunctive relaxation $\DRI$ of $D$ defined as\begin{displaymath}
	\DRI:=\bigcup_{\bchi\in\{0,1\}^I}\DRIchi,
\end{displaymath}
where $\DRIchi:=\{(\bx,\by) \in D^R:\bx_I=\bchi\}$. We provide a sufficient condition for changes to the coefficients
$\alpha_I$ in the inequality $\alpha^T\bx+\beta^T\by\leq \gamma$ that assures the tilted inequality is valid for $D$.
\begin{prop}\label{prop:tilting}
	Let $\alpha^T\bx+\beta^T\by\leq\gamma$ be a valid inequality for $D$.  Then an inequality $\mu^T\bx+\beta^T\by\leq \eta$ with
	$\mu_{[n]\setminus I}=\alpha_{[n]\setminus I}$ is valid for $D$ if\begin{equation}\label{condt:valid_tilting}
		\eta - \mu^T_{I}\chi \geq\min\{\bnuIab,\gamma - \alpha^T_I\bchi\},\quad\forall \bchi\in\{0,1\}^I,
	\end{equation}
	where $\bnuIab=\max\{\alpha^T_{[n]\setminus I}x_{[n]\setminus I}+\beta^Ty:(\bx,\by)\in \DRIchi \}$.
\end{prop}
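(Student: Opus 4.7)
The plan is a direct verification of validity, mirroring the argument behind Proposition~\ref{prop:sparse} but exploiting the availability of the base inequality as a second bound on the objective within each disjunctive term. The key observation is that since $\mu$ agrees with $\alpha$ outside $I$, the tilted objective $\mu^T\bx+\beta^T\by$ differs from the base objective $\alpha^T\bx+\beta^T\by$ only on coordinates in $I$, which are fixed when we restrict to a single term $\DRIchi$.

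First, I would fix an arbitrary $(\bx,\by)\in D$, set $\chi:=\bx_I$, and observe that $(\bx,\by)\in\DRIchi$ since $D\subseteq D^R$. Using $\mu_{[n]\setminus I}=\alpha_{[n]\setminus I}$, rewrite
\begin{equation*}
\mu^T\bx+\beta^T\by \;=\; \mu_I^T\chi + \alpha_{[n]\setminus I}^T x_{[n]\setminus I}+\beta^T\by.
\end{equation*}
It therefore suffices to show
\begin{equation*}
\alpha_{[n]\setminus I}^T x_{[n]\setminus I}+\beta^T\by \;\leq\; \eta-\mu_I^T\chi.
\end{equation*}

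Next, I would produce two independent upper bounds on the left-hand side. By the very definition of $\bnuIab$ and the fact that $(\bx,\by)\in\DRIchi$,
\begin{equation*}
\alpha_{[n]\setminus I}^T x_{[n]\setminus I}+\beta^T\by \;\leq\; \bnuIab.
\end{equation*}
Simultaneously, since the base inequality $\alpha^T\bx+\beta^T\by\leq\gamma$ is valid for $D$ and $\bx_I=\chi$,
\begin{equation*}
\alpha_{[n]\setminus I}^T x_{[n]\setminus I}+\beta^T\by \;\leq\; \gamma-\alpha_I^T\chi.
\end{equation*}
Taking the minimum of these two bounds and invoking hypothesis \eqref{condt:valid_tilting} at this particular $\chi$ yields exactly the required inequality, completing the proof.

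There is no real obstacle here; the argument is essentially a one-line consequence of combining the two natural upper bounds available in each disjunctive term. The only delicate point worth stressing is that the hypothesis must hold for \emph{every} $\chi\in\{0,1\}^I$, not merely those in $\operatorname{proj}_I(D)$, because the argument is driven by whichever term contains the point $(\bx,\by)$; the use of $D^R$ rather than $D$ in defining $\bnuIab$ is what makes the bound a genuine, efficiently computable quantity (via a single optimization over the relaxation) rather than requiring knowledge of $D$ itself.
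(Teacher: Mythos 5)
Your proof is correct and is essentially the same argument as the paper's: both derive the two upper bounds (one from $\bnuIab$ over the relaxed term $\DRIchi$, one from the validity of the base inequality) and combine them via the minimum with hypothesis \eqref{condt:valid_tilting}. The only cosmetic difference is that you argue pointwise for a fixed $(\bx,\by)\in D$ while the paper phrases the same bounds in terms of the maximum over each disjunctive term.
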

\begin{proof}
	Note that $\mu^T\bx+\beta^T \by\leq \eta$ with $\mu_{[n]\setminus I}=\alpha_{[n]\setminus I}$ is valid for $D$ if and
	only if it is valid for $\{(\bx,\by)\in D:\bx_I=\bchi \}$ for all $\bchi\in\{0,1\}^I$, i.e.,
	\begin{equation}\label{tilt_iff}
		\max_{(\bx,\by)\in D}\{\mu^T\bx+\beta^T\by :\bx_I=\bchi\}\leq \eta,~\quad\forall\bchi\in\{0,1\}^I.
	\end{equation}
	Since $\alpha^T\bx+\beta^T\by\leq\gamma$ is valid for $D$, $\beta^T \by \leq -\alpha^T \bx + \gamma$ for all $(\bx,\by) \in D$,
	and hence for all $\bchi\in\{0,1\}^I$ we have
	\begin{align*}
		\max_{(\bx,\by)\in D}\{\mu^T\bx+\beta^T\by : \bx_I=\bchi\}
		&\leq\max_{(\bx,\by) \in D} \{ \mu^T \bx - \alpha^T \bx + \gamma : \bx_I = \bchi\} \\
		&=\max_{(\bx,\by)\in D}\{(\mu_I-\alpha_I)^T\bx_I+\gamma :\bx_I=\bchi\} \\
		&=(\mu_I-\alpha_I)^T\bchi+\gamma.
	\end{align*}
	On the other hand, for all $\bchi\in\{0,1\}^I$,\begin{align*}
		\max_{(\bx,\by)\in D}\{\mu^T \bx+\beta^T \by :\bx_I=\bchi\}
		&\leq\max_{(\bx,\by)\in \DRIchi}\{\mu^T \bx+\beta^T \by\} \\
		& =\bnuIab+\mu_{I}^T\bchi.
	\end{align*}
	Therefore, 
	\begin{align}
		\max_{(\bx,\by)\in D}\{\mu^T \bx+\beta^T \by : \bx_I=\bchi\} &\leq
		\min\{\bnuIab+\mu_{I}^T\bchi,(\mu_I-\alpha_I)^T\bchi+\gamma\} \nonumber \\
		& =\min\{\bnuIab,\gamma-\alpha_I^T\bchi \}+\mu_I^T\bchi.
		\label{eq:sub}
	\end{align}
	Replacing $\max_{(\bx,\by)\in D}\{\mu^T \bx+\beta^T \by : \bx_I=\bchi\}$ in \eqref{tilt_iff} by the upper bound in \eqref{eq:sub}
	yields the condition \eqref{condt:valid_tilting}.
	\qed
\end{proof}

Note that $\alpha^T \bx+\beta^T \by\leq\gamma$ does not need to be valid for $\DRI$.  But if $\alpha^T \bx+\beta^T
\by\leq\gamma$ is valid for $\DRI$, then we can simply replace the maximum in \eqref{condt:valid_tilting} by $\bnuIab$
since $\bnuIab\geq\alpha_I^T\bchi-\gamma$ for all $\bchi\in\{0,1\}^I$ in that case. Similar to \eqref{CGLP}, given a
candidate solution $(\hat{\bx},\hat{\by})$ and fixed $I$, we can write down a cut generating linear program for separating from the inequalities satisfying \eqref{condt:valid_tilting}: \begin{displaymath}
	\max\Big\{\sum_{i\in I}\mu_i\hat{x}_i-\eta:\eta - \mu_I^T\bchi\geq\min\{\bnuIab,\gamma - \alpha_I^T\bchi \},~\bchi\in\{0,1\}^I \Big\}.
\end{displaymath}
We call any valid inequality $\mu^T\bx+\beta^T\by\leq\eta$ satisfying \eqref{condt:valid_tilting} an $I$-sparse tilting of
the inequality $\alpha^T\bx+\beta^T \by\leq\gamma$.  One may also iteratively tilt an inequality by applying an $I_k$-sparse tilting sequentially for some sequence $I_1,I_2,\ldots\subseteq[p]$.

We next provide an example of obtaining valid inequalities using $I$-sparse tilting by showing that perspective cuts \cite{frangioni2006perspective} for convex functions with indicator variables can be obtained by $I$-sparse tilting of subgradient inequalities for some $I$ with $|I|=1$.

\begin{exmp}[Perspective cuts]
	The following mixed-binary structure is common in many applications with on/off decisions (e.g., the quadratic uncapacitated facility location problem \cite{gunluk2007minlp}):\begin{displaymath}
		D=\{(x,\bz,\theta)\in \{0,1\}\times\bR^{m+1}:\bzero \leq \bz\leq \bu x,~\theta\geq f(\bz) \},
	\end{displaymath}
	where $\bu\in\bR_+^m$ and $f:\bR^m_+\rightarrow\bR$ is a closed convex function with $f(\bzero)=0$. The convex hull of $F$ can be characterized by the perspective function of the function $f$ \cite{ceria1999convex}. One way of obtaining $\conv(F)$ by the perspective function is to add the (potentially infinitely many) perspective cuts \cite{frangioni2006perspective} of the form:\begin{equation}\label{cut:perspective}
		\theta\geq \bs^T \bz+(f(\bar{\bz})-\bs^T\bar{\bz})x,
	\end{equation}
	where $\bar{\bz}\in[\bzero,\bu]$ and $\bs\in\partial f(\bar{\bz})$. We show that \eqref{cut:perspective} can be obtained by $I$-sparse tilting of the following subgradient inequality of $f$:\begin{equation}\label{cut:subgradient}
		\theta\geq f(\bar{\bz})+\bs^T(\bz-\bar{\bz})\Leftrightarrow 0x+\bs^T\bz-\theta\leq-f(\bar{\bz})+\bs^T\bar{\bz}.
	\end{equation}
	By applying $I$-sparse tilting (Proposition \ref{prop:tilting}) with $I=\{1\}$, the tilted inequality $\mu x+\bs^T\bz-\theta\leq\eta$ is valid if the following two inequalities hold:\begin{enumerate}
		\item $\eta\geq \min\{0,\bs^T\bar{\bz}-f(\bar{\bz}) \}=0$ (because $0=f(\bzero)\geq
		f(\bar{\bz})+\bs^T(\bzero-\bar{\bz})$);
		\item $\eta - \mu \geq \min\big\{\max\{\bs^T\bz -\theta:\theta\geq f(\bz),\bzero\leq \bz\leq
		\bu\},\bs^T\bar{\bz}-f(\bar{\bz})
		\big\}=\bs^T\bar{\bz}-f(\bar{\bz})$ (because \eqref{cut:subgradient} attains equality at
		$(\bz,\theta)=(\bar{\bz},f(\bar{\bz}))$).
	\end{enumerate}
	We then obtain the perspective cut \eqref{cut:perspective} by choosing $(\mu,\eta)$ with both inequalities satisfied
	at equality, i.e., by setting $\mu=f(\bar{\bz})-\bs^T\bar{\bz}$ and $\eta=0$.
\end{exmp}

\section{Conclusion}\label{sec:conc}
We investigate methods for generating $I$-sparse cuts for the epigraph of a function of binary variables. Two selection rules are proposed to choose the support $I$. Numerical experiments demonstrate that $I$-sparse cuts are very effective on problems with sparse features.

We also extend our idea to strengthen valid inequalities by tilting coefficients on a sparse subset of variables. It would be interesting to explore this idea further. Another direction of future work is to integrate our techniques into SDDiP \cite{zou2019stochastic} to solve multistage stochastic integer programs.



\bibliographystyle{splncs04}
\bibliography{refs}   

\appendix
\section*{Appendix}
\subsection*{SNIP problem}
The SNIP problem \cite{pan2008minimizing} is a two-stage stochastic integer program with pure binary first-stage and
continuous second-stage variables. In this problem, by installing sensors on some arcs of a directed network to in the
first stage, the defender tries to find the attacker and minimize the probability that the attacker travels from the
origin to the destination undetected. In the second stage, the origin and destination of the attacker  are observed and
the attacker chooses to travel on the maximum reliability path from its origin to its destination. Let $N$ and $A$
denote the node set and the arc set of the network and let $D\subseteq A$ denote the set of interdictable arcs. The
first-stage variables are denoted by $\bx$, where $x_a=1$ if and only if the defender installs a sensor on arc $a\in D$. Each scenario $s \in S$ is associated with a possible origin/destination combination of the attacker, with $u^s$ representing the origin and $v^s$ representing the destination of the attacker for each scenario $s \in S$. The second-stage variables are denoted by $\pi$, where $\pi^s_i$ denotes the maximum probability of reaching destination $v^s$ undetected from node $i$ in scenario $s$. The budget for installing sensors is $b$, and the cost of installing a
sensor on arc $a$ is $c_a$ for each arc $a\in D$. For each arc $a\in A$, the probability of traveling on arc $a$ undetected is $r_{a}$ if the arc is not interdicted, or $q_a$ if the arc is interdicted. Parameter $\bar{\pi}_j^s$ denotes the maximum probability of reaching the destination undetected from node $j$ when no sensors are installed. The extensive formulation of the problem is as follows:\begin{displaymath}
	\begin{aligned}
		\min_{\bx,\bpi^s}\ &\sum_{s\in S}p_s\pi_{u^s}^s\\
		\text{s.t. }\ &\sum_{a\in A}c_ax_a\leq b,\\
		&\pi_i^s-r_a\pi_j^s\geq 0, &&a=(i,j)\in A\setminus D,\ s\in S,\\
		&\pi_i^s-r_a\pi_j^s\geq -(r_a-q_a)\bar{\pi}_j^sx_a, &&a=(i,j)\in D,\ s\in S,\\
		&\pi_i^s-q_a\pi_j^s\geq 0, &&a=(i,j)\in D,\ s\in S,\\
		&\pi_{v^s}^s=1, &&s\in S,\\
		&x_a\in \{0,1\}, &&a\in D.
	\end{aligned}
\end{displaymath}
We use the SNIP instances from \cite{pan2008minimizing}. We consider instances with snipno = 3, and budget $b\in\{30,50,70,90\}$. All instances have 320 first-stage binary variables, 2586 second-stage continuous variables per scenario and 456 scenarios.

\subsection*{LLA problem}
The LLA problem is introduced in \cite{mendez2014branch}. In this problem, a retailer chooses a set of items to display for customers to purchase. The model assumes all customers are from a set of customer segments. For each customer segment $k$, the customers arrive according to a Poisson process with rate $\lambda_k$ and only purchases products in the consideration set $C_k$. For product $i$, the relative attractiveness of it to customers in segment $k$ is $v_i^k$, and the retailer earns a positive profit $w_i$ for each purchase of it by the customers. The preference of not purchasing anything is denoted by $v_0^k$ for customers in segment $k$. The retailer can at most choose $c\cdot n$ of the items to display. The problem can be formulated as a MINLP as follows:\begin{displaymath}
	\max_{\bx}\Big\{\sum_{k=1}^{N}\sum_{i\in C_k}\frac{\lambda_kv_i^kw_ix_i}{\sum_{i\in
			C_k}v_i^kx_i+v_0^k}:\sum_{i=1}^nx_i\leq c\cdot n, \bx\in\{0,1\}^n \Big\}.
\end{displaymath}
In \cite{mendez2014branch}, the authors reformulate the MINLP as a MILP by introducing a variable $y_k$ to represent the value of $1/(\sum_{i\in C_k}v_i^kx_i+v_0^k)$ and a variable $z_{ik}$ to linearize the product $x_iy_k$. The MILP reformulation of the problem is as follows:\begin{align*}
	\max_{\bx,\by,\bz}~&\sum_{k=1}^K\sum_{i\in C_k}\lambda_kv_i^kw_i z_{ik}\\
	\text{s.t. }&v_0^ky_k+\sum_{i\in C_k}v_i^kz_{ik}=1,&&\hspace{-2cm}k\in[K],\\
	&v_0^ky_k-v_0^kz_{ik}\leq 1-x_i,&&\hspace{-2cm}k\in[K],i\in C_k,\\
	&z_{ik}\leq y_k,&&\hspace{-2cm}k\in[K],i\in C_k,\\
	&(v_0^k+v_i^k)z_{ik}\leq x_i,&&\hspace{-2cm}k\in[K],i\in C_k,\\
	&\sum_{i=1}^n x_i\leq c\cdot n,\\
	&\bx\in\{0,1\}^n,~\by\geq \bzero,~\bz\geq \bzero.
\end{align*}
For the generation of test instances, we follow the basic scheme of generating type-1 problems in \cite{mendez2014branch}, but increase the value of some of the parameters to make the instances harder. We set $n=500$ and $N=200$. For each segment $k$, its arrival rate $\lambda_k$ and the preference of no purchase $v_0^k$ are randomly generated according to the uniform distributions Uniform$([0,1])$ and Uniform$([0,4])$, respectively. The preference $v_i^k$ for segment $k$ of purchasing product $i$ is randomly generated according to the discrete uniform distribution Uniform$(\{0,1,\ldots,10\})$. For each odd $k\in K$, $C_k$ is a independently randomly chosen subset of $[n]$ with size $p\in\{12,16,20\}$. For each even $k\in [N]$, $C_k$ is a random subset of $C_{k-1}$ of size $p/2$. The profit $w_i$ of product $i$ is independently randomly generated according to the uniform distribution Uniform$([100,U])$ with $U\in\{150,350\}$. The capacity parameter $c$ is chosen from $\{20\%,50\%,100\%\}$. We generate one instance for each combination of $(p,U,c)$.

\subsection*{CAP problem}
The stochastic CAP problem \cite{bodur2017strengthened} is a generalization of the deterministic CAP problem
\cite{louveaux1986discrete}, which can be formulated as a stochastic two-stage integer program. In this problem, the
decision maker chooses to open a set of facilities to meet uncertain customer demands. The first-stage variables are
denoted by $\bx$ with $x_i=1$ if and only if facility $i$ is chosen to be opened. The second-stage variables are denoted
by $\by$, where $y_{ij}^k$ is the amount of the $j$th customer's demand met by facility $i$ in scenario $k$. For each facility $i$, the associated opening cost and its capacity are denoted by $f_i$ and $s_i$, respectively. The cost associated with satisfying a unit of the $j$th customer demand using facility $i$ (sending a unit of flow from facility $i$ to customer $j$) is denoted by $q_{ij}$. The $j$th customer's demand under scenario $k$ is denoted by $\lambda_j^k$. The extensive formulation of the problem is as follows:\begin{align*}
	\min_{\bx,\by}~ &\sum_{i=1}^nf_ix_i+N^{-1}\sum_{k=1}^N&&\hspace{-3.25cm}\sum_{i=1}^n\sum_{j=1}^m q_{ij}y_{ij}^k\\
	\text{s.t. } & \sum_{i=1}^ny_{ij}^k\geq\lambda_j^k, &&\hspace{-3.3cm}j\in[m],~k\in[K],\\
	&\sum_{j=1}^my_{ij}^k\leq s_ix_i, &&\hspace{-3.3cm}i\in[n],~k\in[K],\\
	&\sum_{i=1}^ns_ix_i\geq\max_{k\in[K]}\sum_{j=1}^m\lambda_j^k,\\
	&\bx\in\{0,1\}^n,~\by\geq \bzero.
\end{align*}
There are in total 16 CAP test instances all taken from \cite{bodur2017strengthened} with $n\in\{25,50\}$, $m=50$ and $N=250$.


%
%

	
\end{document}